\documentclass[11pt,a4paper,reqno]{amsart} 
\usepackage[applemac]{inputenc}
\usepackage{amsmath}
\usepackage{amssymb}
\usepackage{euscript}
\usepackage[colorlinks=true,linktocpage=true,pagebackref=false, citecolor=black,linkcolor=black]{hyperref}
\usepackage{pdfsync}
\usepackage{mathrsfs}
\usepackage{wasysym}
\usepackage[all]{xy}
\usepackage{color}

\hoffset=-15mm
\voffset=-10mm
\setlength{\textwidth}{16cm}
\setlength{\textheight}{24.1cm}

\parskip=0.75ex
\raggedbottom


\newtheorem{thm}{Theorem}[section]
\newtheorem{lem}[thm]{Lemma}
\newtheorem{prop}[thm]{Proposition}

\newtheorem{cor}[thm]{Corollary}

\theoremstyle{definition}

\newtheorem{examples}[thm]{Examples}

\theoremstyle{remark}
\newtheorem{remark}[thm]{Remark}
\newtheorem{remarks}[thm]{Remarks}

\numberwithin{equation}{section}

\setcounter{secnumdepth}{4}



 \newcommand{\R}{{\mathbb R}}
\newcommand{\Q}{{\mathbb Q}}

\newcommand{\sph}{{\mathbb S}}



\newcommand{\gtp}{{\mathfrak p}} \newcommand{\gtq}{{\mathfrak q}}
\newcommand{\gtm}{{\mathfrak m}} \newcommand{\gtn}{{\mathfrak n}}
\newcommand{\gta}{{\mathfrak a}}


\newcommand{\Bb}{{\EuScript B}}

\newcommand{\Dd}{{\EuScript D}}
\newcommand{\Ee}{{\EuScript E}}

\newcommand{\Zz}{{\EuScript Z}}
\newcommand{\Tt}{{\EuScript T}}


\newcommand{\Hom}{\operatorname{Hom}}
\newcommand{\im}{\operatorname{im}}
\newcommand{\qf}{\operatorname{qf}}

\newcommand{\Int}{\operatorname{Int}}
\newcommand{\dist}{\operatorname{dist}}

\newcommand{\hgt}{\operatorname{ht}}

\newcommand{\ceros}{\operatorname{\mathcal Z}}

\newcommand{\gr}{\operatorname{graph}}
\newcommand{\cl}{\operatorname{Cl}}

\newcommand{\di}{\operatorname{{\mathcal D}}}

\newcommand{\dgt}{\operatorname{d}}

\newcommand{\lc}{\operatorname{lc}}
\newcommand{\diam}{{\text{\tiny$\displaystyle\diamond$}}}
\newcommand{\gtmd}{\operatorname{\gtm^{\diam}\hspace{-1.5mm}}}

\newcommand{\Specs}{\operatorname{Spec_s}}
\newcommand{\Speca}{\operatorname{Spec_s^*}}
\newcommand{\Specd}{\operatorname{Spec_s^{\diam}}}
\newcommand{\betas}{\operatorname{\beta_s\!}}
\newcommand{\betaa}{\operatorname{\beta_s^*\!\!}}
\newcommand{\betad}{\operatorname{\beta_s^{\diam}\!\!}}


\newcommand{\x}{{\tt x}} \newcommand{\y}{{\tt y}} 
\newcommand{\z}{{\tt z}} \renewcommand{\t}{{\tt t}}
\newcommand{\s}{{\tt s}}


\newcommand{\veps}{\varepsilon}

\newcommand{\ol }{\overline}

\begin{document}

\title[On the remainder of the semialgebraic Stone-C\v{e}ch compactification]{On the remainder of the semialgebraic\\ Stone-C\v{e}ch compactification of a semialgebraic set}

\author{Jos\'e F. Fernando}
\author{J.M. Gamboa}
\address{Departamento de \'Algebra, Facultad de Ciencias Matem\'aticas, Universidad Complutense de Madrid, 28040 MADRID (SPAIN)}
\curraddr{}
\email{josefer@mat.ucm.es, jmgamboa@mat.ucm.es}
\thanks{Authors supported by Spanish GR MTM2011-22435 and GAAR Grupos UCM 910444}

\date{March 31, 2015}
\subjclass[2000]{Primary 14P10, 54C30; Secondary 12D15, 13E99}

\keywords{Semialgebraic Stone-C\v{e}ch compactification of a semialgebraic set, free maximal ideal associated with a formal path, free maximal ideal associated with a semialgebraic path, metrizable neighborhood, first-countable points}

\begin{abstract}
In this work we analyze some topological properties of the remainder $\partial M:=\betaa M\setminus M$ of the semialgebraic Stone-C\v{e}ch compactification $\betaa M$ of a semialgebraic set $M\subset\R^m$ in order to `distinguish' its points from those of $M$. To that end we prove that the set of points of $\betaa M$ that admit a metrizable neighborhood in $\betaa M$ equals $M_{\lc}\cup(\cl_{\betaa M}(\ol{M}_{\leq1})\setminus\ol{M}_{\leq1})$ where $M_{\lc}$ is the largest locally compact dense subset of $M$ and $\ol{M}_{\leq1}$ is the closure in $M$ of the set of $1$-dimensional points of $M$. In addition, we analyze the properties of the sets $\widehat{\partial}M$ and $\widetilde{\partial}M$ of free maximal ideals associated with formal and semialgebraic paths. We prove that both are dense subsets of the remainder $\partial M$ and that the differences $\partial M\setminus\widehat{\partial}M$ and $\widehat{\partial} M\setminus\widetilde{\partial}M$ are also dense subsets of $\partial M$. It holds moreover that all the points of $\widehat{\partial}M$ have countable systems of neighborhoods in $\betaa M$.
\end{abstract}

\maketitle

\section{Introduction}\label{s1}

A semialgebraic set $M\subset\R^m$ is a (finite) boolean combination of sets defined by polynomial equalities and inequalities. A continuous map $f:M\to N$ is \emph{semialgebraic} if its graph is a semialgebraic set (in particular, both $M$ and $N$ are semialgebraic). As usual $f$ is a \em semialgebraic function \em if $N=\R$. Denote the set of (continuous) semialgebraic maps from $M$ to $N$ with ${\mathcal S}(M,N)$. The sum and product of functions defined pointwise endow the set ${\mathcal S}(M)$ of semialgebraic functions on $M$ with a natural structure of a unital commutative ring. In fact ${\mathcal S}(M)$ is an $\R$-algebra and the subset ${\mathcal S}^*(M)$ of bounded semialgebraic functions on $M$ is an $\R$-subalgebra of ${\mathcal S}(M)$. Write ${\mathcal S}^{\diam}(M)$ to refer indistinctly either to ${\mathcal S}(M)$ or ${\mathcal S}^*(M)$. We denote the Zariski spectrum of ${\mathcal S}^{\diam}(M)$ with $\Specd(M)$ and the maximal spectrum of ${\mathcal S}^{\diam}(M)$ with $\betad M$. The maximal spectra $\betas M$ and $\betaa M$ are always homeomorphic but the involved homeomorphism is not natural from a categorical point of view \cite[3.6]{fg3}. In the following $M\subset\R^m$ and $N\subset\R^n$ always denote semialgebraic sets.

A point $p$ of a topological space $X$ is an \em endpoint of $X$ \em if it has an open neighborhood $U\subset X$ equipped with a homeomorphism $f:U\to[0,1)$ that maps $p$ onto $0$. In case $X$ is a semialgebraic set we may assume that the previous homeomorphism is semialgebraic. We denote the set of endpoints of $X$ with $\eta(X)$. Recall briefly how the ring ${\mathcal S}(M)$ (resp. ${\mathcal S}^*(M)$) determine $M$ (resp. $M$ besides the finite set $\eta(M)$ of endpoints of $M$) up to semialgebraic homeomorphism. More generally the Zariski spectrum $\Specs(M)$ determines $M$ up to homeomorphism while $\Speca(M)$ classifies topologically $M\setminus\eta(M)$.

\subsection{Rings of semialgebraic functions}\label{11}
It is natural to wonder whether the ring ${\mathcal S}(M)$ determines the semialgebraic set $M$. Let $N\subset\R^n$ be a semialgebraic set, let $\varphi:{\mathcal S}(N)\to{\mathcal S}(M)$ be a homomorphism of $\R$-algebras and let $\pi_i$ be the restriction to $N$ of the $i$th projection. Recall that: \em the image of the semialgebraic map $f:=(f_1,\ldots,f_n):=(\varphi(\pi_1),\ldots,\varphi(\pi_n)):M\to\R^n$ is contained in $N$ and $\varphi(g)=g\circ f$ for all $g\in{\mathcal S}(N)$\em. 
\begin{proof}
Fix $a\in M$ and let $g_a\in{\mathcal S}(N)$ be given by $g_a(x_1,\ldots,x_n):=\sum_{i=1}^n(x_i-f_i(a))^2$. Observe that $\varphi(g_a)$ vanishes at $a$, so $\varphi(g_a)$ is not a unit of ${\mathcal S}(M)$. Consequently, $g_a$ has a (unique) zero in $N$, which is $f(a)$. Thus, $f(a)\in N$. Denote $\gtm_a$ the maximal ideal of ${\mathcal S}(M)$ associated with $a$ and $\gtn_{f(a)}$ the maximal ideal of ${\mathcal S}(N)$ associated with $f(a)$. As $\varphi(g_a)\in\gtm_a$, also $g_a^2+h^2\in\varphi^{-1 }(\gtm_a)$ for every $h\in\varphi^{-1 }(\gtm_a)$. As $g_a^2+h^2$ is not a unit, it vanishes at the unique zero $f(a)$ of $g_a$. Consequently, $\varphi^{-1}(\gtm_a)\subset\gtn_{f(a)}$, so $\varphi(h)(a)=0$ implies $h(f(a))=0$ for all $h\in{\mathcal S}(N)$. As $\varphi$ is an $\R$-algebra homeomorphism, $\varphi(g)=g\circ f$ for all $g\in{\mathcal S}(N)$, as claimed.
\end{proof}

Consider the natural map 
$$
(\cdot)^*:{\mathcal S}(M,N)\to\Hom_{\R\text{-alg}}({\mathcal S}(N),{\mathcal S}(M)),\ f\mapsto f^*
$$ 
where $f^*:{\mathcal S}(N)\to{\mathcal S}(M),\ g\mapsto g\circ f$. We have proved before: \em $(\cdot)^*$ is a bijection\em. Consequently, \em $M$ and $N$ are semialgebraically homeomorphic if and only if the rings ${\mathcal S}(M)$ and ${\mathcal S}(N)$ are isomorphic\em. This argument goes back to the pioneer work of Schwartz \cite{s0,s1,s2}. Consequently, the category of semialgebraic sets is faithfully reflected in the full subcategory of real closed rings consisting of all $\R$-algebras of the form ${\mathcal S}(M)$. 

\subsection{Rings of bounded semialgebraic functions} 
The next step is to wonder whether the ring ${\mathcal S}^*(M)$ determines the semialgebraic set $M$. A point $p\in M$ is an \em endpoint of $M$ \em if it has an open neighborhood $U\subset M$ equipped with a semialgebraic homeomorphism $f:U\to[0,1)$ that maps $p$ onto $0$. Recall that ${\mathcal S}^{\diam}(M)$ is a real closed ring \cite{s1,sm}. In \cite[\S11]{t1} it is shown that for every real closed ring $A$ there exists a largest real closed ring $B$ such that $A$ is convex in $B$. In \cite{s3} it is shown how the spectrum of a real closed ring lies in the spectrum of any convex subring. Schwartz proved in \cite[\S5]{s4} that ${\mathcal S}(M\setminus\eta(M))$ is the convex closure of ${\mathcal S}^*(M)={\mathcal S}^*(M\setminus\eta(M))$. If 
${\mathcal S}^*(N)$ and ${\mathcal S}^*(M)$ are isomorphic as $\R$-algebras, then their convex closures ${\mathcal S}(N\setminus\eta(N))$ and ${\mathcal S}(M\setminus\eta(M))$ are also isomorphic as $\R$-algebras. Consequently, the semialgebraic sets $M\setminus\eta(M)$ and $N\setminus\eta(N)$ are by \ref{11} semialgebraically homeomorphic. 

\subsection{Homeomorphisms between Zariski spectra}
Homeomorphisms between Zariski spectra induced by $\R$-algebra isomorphisms are quite restrictive and it is natural to wonder what happens when dealing with general homeomorphisms. Let us recall first how the Zariski spectrum $\Specs(M)$ determines $M$ up to homeomorphism. To that end we need the following topological property that distinguishes the points of $M$ from those of $\Specs(M)\setminus M$: 

\paragraph{}\em The maximal ideals $\gtm_a$ of ${\mathcal S}(M)$ associated with points $a\in M$ can be characterized topologically as those points that are isolated for the inverse topology of $\Specs(M)$ within the set of closed points of the Zariski topology of $\Specs(M)$ \em (the inverse topology has the open and quasi-compact sets of $\Specs(M)$ as a basis of closed sets). In algebraic terms this means that the maximal ideals $\gtm_a$ associated with points $a\in M$ are exactly those maximal ideals that are the Jacobson radical of a principal ideal (the ideal generated by the distance function to a point $a\in M$ restricted to $M$ has Jacobson radical $\gtm_a$). Consequently, 

\paragraph{} \em Every homeomorphism $\gamma:\Specs(N)\to\Specs(M)$ restricts to homeomorphisms $\gamma|_{\betas N}:\betas N\to\betas M$ and $\gamma|_N:N\to M$\em. 

There are other topological properties of $M$ that can be explicitly encoded in $\Specs(M)$, see \cite[\S4]{t1}. For an approach to these questions in the frame of rings of definable continuous functions on definable sets in o-minimal expansions of fields we refer the reader to \cite{t0}. 

\paragraph{}\label{restricti0} With the Zariski spectrum $\Speca(M)$ one can proceed analogously using the inverse topology, so we have to determine the set $\Ee$ of maximal ideals (or closed points for the Zariski topology) $\gtm^*$ that are the Jacobson radical of a principal ideal of ${\mathcal S}^*(M)$. We prove in Corollary \ref{eum2} that $\Ee=M\cup\eta(\betaa M)$. As $\eta(M)\subset\eta(\betaa M)$, the Zariski spectrum $\Speca(M)$ classifies topologically $\betaa M$ and $M\setminus\eta(M)$. In this case ${\mathcal S}^*(M)={\mathcal S}^*(M\setminus\eta(M))$, so we cannot distinguish topologically the endpoints of $M$ from those of $\betaa M$ that are not in $M$. Consequently, 

\paragraph{} \em Every homeomorphism $\gamma:\Speca(N)\to\Speca(M)$ restricts to homeomorphisms $\gamma|_{\betas N}:\betas N\to\betas M$ and $\gamma|_{N\setminus\eta(N)}:N\setminus\eta(N)\to M\setminus\eta(M)$\em. 

We prove in Corollary \ref{eum} that $M\setminus\eta(M)$ is the set of closed branching points of $\Speca(M)$. A prime ideal $\gtp$ of ${\mathcal S}^*(M)$ is a \em branching point of $\Speca(M)$ \em if there exist two prime ideals $\gtq_1,\gtq_2$ of ${\mathcal S}^*(M)$ different from $\gtp$ such that $\gtp\in\cl(\gtq_i)$ for $i=1,2$ but $\gtq_i\not\in\cl(\gtq_j)$ if $i\neq j$. The condition `to be a closed branching point' provides an alternative topological characterization of the points of $M\setminus\eta(M)$ inside $\Speca(M)$ that only involves its Zariski topology. 

\paragraph{} As a consequence of \ref{restricti0} it holds that a homeomorphism $\gamma:\betaa N\to\betaa M$ restricts to a homeomorphism $\gamma|_{N\setminus\eta(N)}:N\setminus\eta(N)\to M\setminus\eta(M)$. On the contrary there are many homeomorphisms between semialgebraic sets that do not extend to their semialgebraic Stone-C\v{e}ch compactifications (see Examples \ref{gpn}). Consequently, they admit neither extensions to the Zariski spectra $\Specs(M)$ and $\Speca(M)$. In addition Shiota--Yokoi proposed in \cite{ys} a pair of compact homeomorphic semialgebraic sets that are not semialgebraically homeomorphic.

\subsection{Topological properties of maximal spectra}
It seems reasonable to find alternative topological conditions with respect to the Zariski topology of $\betaa M$ that characterize the points of $M$ among those of $\betaa M$. In \cite[9.6-7]{gj} the authors prove that if $X$ is a metrizable space, then $X$ is the set of $G_{\delta}$-points of the Stone--\v{C}ech compactification $\beta X$ of $X$. It would seem reasonable to follow a similar strategy. As we show in Lemma \ref{cn}, all points of $M$ have a countable basis of neighborhoods in $\betaa M$. We prove in Theorem \ref{gpn3} that the same happens for the dense subset $\widehat{\partial}M$ of the remainder $\partial M:=\betaa M\setminus M$ constituted by the free maximal ideals associated with formal paths that we study with care in Section \ref{s4}. We study also some properties of the set $\widetilde{\partial}M$ constituted by the free maximal ideals associated with semialgebraic paths. We prove that this set is dense in $\partial M$ and that $\widehat{\partial}M\setminus\widetilde{\partial}M$ and $\partial M\setminus\widehat{\partial}M$ are respectively dense in $\widehat{\partial}M$ and $\partial M$.

An almost satisfactory topological property to distinguish the points of $\partial M$ from those of $M$ is `to admit a metrizable neighborhood in $\betaa M$'. We characterized the semialgebraic sets $M$ whose maximal spectrum $\betaa M$ is a metrizable space in \cite[5.17]{fg5}: \em this happens for those semialgebraic sets whose maximal spectrum $\betaa M$ is homeomorphic to a semialgebraic set\em. In Theorem \ref{mn} we prove that the set of points of $\betaa M$ that admit a metrizable neighborhood in $\betaa M$ is $M_{\lc}\cup\eta(\betaa M)$. In addition, $\eta(\betaa M)=\eta(M)\cup(\cl_{\betaa M}(\ol{M}_{\leq1})\setminus\ol{M}_{\leq1})$ where $\ol{M}_{\leq1}$ is the closure in $M$ of the set of $1$-dimensional points of $M$.

For a complementary study of other topological properties of $\partial M$ (as local connectedness, local compactness or number of connected components) we refer the reader to \cite{fg5}. 

\subsection*{Structure of the article} 
In Section \ref{s2} we compile the preliminary terminology and results concerning Zariski and maximal spectra of rings of semialgebraic and bounded semialgebraic functions that we use along this work. Most of the results in Section \ref{s2} are collected from \cite{fe1,fg2,fg3,fg5} and presented without proofs. The reading can be started directly in Section \ref{s3} and referred to the preliminaries only when needed. In Section \ref{s3} we study algebraic properties of points of the remainder associated with formal paths and semialgebraic paths and we analyze as a consequence some announced properties of $\eta(M)$ and $M\setminus\eta(M)$. In Section \ref{s4} we analyze the main properties of the remainder $\partial M$ quoted above: \em density of $\widehat{\partial}M$ in $\partial M$, density of $\widehat{\partial}M\setminus\widetilde{\partial}M$ and $\partial M\setminus\widehat{\partial}M$ in $\widehat{\partial}M$ and $\partial M$ respectively, the points of $\widehat{\partial}M$ are first-countable in $\betaa M$ \em and \em the characterization of the points of $\betaa M$ with metrizable neighborhoods\em.

\section{Preliminaries on spectra of rings of semialgebraic functions}\label{s2}

In the following $M\subset\R^m$ denotes a semialgebraic set. For each $f\in{\mathcal S}^{\diam}(M)$ the semialgebraic set $Z(f):=\{x\in M:\ f(x)=0\}$ is the \em zero set \em of $f$ and $D(f):=M\setminus Z(f)$ is its complement. The \em dimension \em $\dim(M)$ of $M$ is the dimension of its algebraic Zariski closure \cite[\S2.8]{bcr}. The \em local dimension $\dim(M_x)$ of $M$ at a point $x\in\cl(M)$ \em is the dimension $\dim(U)$ of a small enough open semialgebraic neighborhood $U\subset\cl(M)$ of $x$. The dimension of $M$ coincides with the maximum of those local dimensions. For any fixed $d$ the set of points $x\in M$ such that $\dim(M_x)=d$ is a semialgebraic subset of $M$. 

\subsection{Locally closed semialgebraic sets}
Locally closed subsets of a locally compact topological space coincide with locally compact ones \cite[\S9.7. Prop.12-13]{bo}. The sets $\cl(M)$ and $U:=\R^m\setminus(\cl(M)\setminus M)$ are semialgebraic. If $M$ is locally compact, $U$ is open and $M=\cl(M)\cap U$ is the intersection of a closed and an open semialgebraic subsets of $\R^m$. The construction of the largest locally compact and dense subset $M_{\lc}$ of $M$ is the main goal of \cite[9.14-9.21]{dk2}. Define $\rho_0(M):=\cl(M)\setminus M$ and $\rho_1(M):=\rho_0(\rho_0(M))=\cl(\rho_0(M))\cap M$. 

\begin{prop}\label{rho} 
The semialgebraic set $M_{\lc}:=M\setminus\rho_1(M)=\cl(M)\setminus\cl(\rho_0(M))$ is the largest locally compact and dense subset of $M$ and coincides with the set of points of $M$ that have a compact neighborhood in $M$.
\end{prop}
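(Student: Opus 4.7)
The plan is to establish four assertions: (a) the two expressions for $M_{\lc}$ coincide; (b) $M_{\lc}$ is locally compact; (c) $M_{\lc}$ is dense in $M$; and (d) $M_{\lc}$ equals both the set of points of $M$ having a compact neighborhood in $M$ and the largest locally compact dense subset of $M$. Item (a) is a formal check: by definition $M\setminus\rho_1(M)=M\setminus\cl(\rho_0(M))$, and since $\cl(M)=M\cup\rho_0(M)$ with $\rho_0(M)\subset\cl(\rho_0(M))$, removing $\cl(\rho_0(M))$ from $\cl(M)$ yields exactly $M\setminus\cl(\rho_0(M))$.

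The heart of (b), (d) and the ``compact neighborhood'' characterization is the standard fact that a subspace $A$ of a locally compact Hausdorff space $X$ admits a compact neighborhood at $x\in A$ if and only if $A$ is locally closed at $x$, i.e.\ there exists an open $V\ni x$ in $X$ with $V\cap A$ closed in $V$. For $A=M\subset\R^m$, this condition is equivalent to $V\cap\cl(M)=V\cap M$ (using $V\cap\cl(V\cap M)=V\cap\cl(M)$ since $V$ is open), hence to $V\cap\rho_0(M)=\emptyset$, hence to $x\notin\cl(\rho_0(M))$, i.e.\ $x\in M_{\lc}$. The same observation gives (b): for $x\in M_{\lc}$ one shrinks $V$ so that $V\cap\cl(\rho_0(M))=\emptyset$, whence $V\cap M_{\lc}=V\cap M=V\cap\cl(M)$ is closed in $V$, so $x$ inherits a compact neighborhood inside $M_{\lc}$. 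Maximality in (d) is equally immediate: if $L\subset M$ is locally compact and dense in $M$, then for each $x\in L$ local closedness furnishes an open $V\ni x$ with $V\cap L=V\cap\cl(L)$; density gives $\cl(L)=\cl(M)$, and the sandwich $V\cap L\subset V\cap M\subset V\cap\cl(M)=V\cap L$ forces $V\cap M=V\cap\cl(M)$, whence $x\in M_{\lc}$.

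The only genuinely non-formal point is density (c), where the plan is to invoke the semialgebraic dimension inequality $\dim(\cl(S)\setminus S)<\dim S$ of \cite[2.8.13]{bcr}, applied locally. For a small open ball $B$ around $x\in\cl(M)$ the identity $B\cap\cl(B\cap M)=B\cap\cl(M)$ yields $B\cap\rho_0(M)\subset\cl(B\cap M)\setminus(B\cap M)$, hence $\dim(\rho_0(M)_x)<\dim(M_x)$, and therefore $\dim(\cl(\rho_0(M))_x)<\dim(M_x)$ since closure preserves local dimension for semialgebraic sets. If a subset $V\cap M$ open in $M$ were contained in $\rho_1(M)=\cl(\rho_0(M))\cap M$, choosing $x\in V\cap M$ and shrinking $V$ so that $V\cap M$ has constant local dimension $\dim(M_x)$ would give $\dim(M_x)=\dim(V\cap M)\leq\dim(V\cap\cl(\rho_0(M)))=\dim(\cl(\rho_0(M))_x)<\dim(M_x)$, a contradiction. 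Thus $\rho_1(M)$ has empty interior in $M$, proving (c). I expect this dimension step to be the only delicate point; the remainder of the proof is bookkeeping with closures together with the locally-closed vs.\ locally-compact dictionary.
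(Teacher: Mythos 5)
Your proof is correct. The paper itself gives no argument for this proposition --- it is quoted from Delfs--Knebusch \cite[9.14--9.21]{dk2} --- so there is nothing internal to compare against; your route (the locally-closed/locally-compact dictionary in $\R^m$, which the paper itself invokes via \cite[\S9.7, Prop.~12--13]{bo}, combined with the dimension inequality $\dim(\cl(S)\setminus S)<\dim S$ applied locally to get $\operatorname{int}_M(\rho_1(M))=\varnothing$) is exactly the standard self-contained proof, and each of your four steps checks out, including the maximality argument via $\cl(L)=\cl(M)$ for a dense locally compact $L\subset M$.
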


\begin{remarks}\label{curvas}
(i) If $M$ has dimension $\leq 1$, then $M$ is locally compact.

(ii) Denote the set of points of $M$ of local dimension $\geq 2$ with $M_{\geq2}$. We have $M=M_{\geq2}\cup\ol{M}_{\leq1}$ and $M_{\geq2}\cap\ol{M}_{\leq1}$ is a finite set.

(iii) \em If $M_{\geq2}$ is compact, $M$ is locally compact\em. Indeed, 
$$
\rho_0(M)=\cl(M_{\geq2}\cup\ol{M}_{\leq1})\setminus(M_{\geq2}\cup\ol{M}_{\leq1})=\cl(\ol{M}_{\leq1})\setminus(M_{\geq2}\cup\ol{M}_{\leq1})=\rho_0(\ol{M}_{\leq1})\setminus M_{\geq2}
$$
is a finite set, so $\rho_1(M)$ is empty. Thus, $M=M_{\lc}$ is locally compact.
\end{remarks}

\subsection{Zariski spectra of rings of semialgebraic functions.}\label{zr}\setcounter{paragraph}{0}
We recall some results concerning the Zariski spectra of rings of semialgebraic functions and bounded semialgebraic functions \cite[\S3-\S6]{fg3}. The \em Zariski spectrum \em $\Specd(M)$ is endowed with the Zariski topology, which has the family of sets $\Dd_{\Specd(M)}(f):=\{\gtp\in\Specd(M):\, f\not\in\gtp\}$ as a basis of open sets where $f\in{\mathcal S}^{\diam}(M)$. We denote $\Zz_{\Specd(M)}(f):=\Specd(M)\setminus\Dd_{\Specd(M)}(f)$. Recall that ${\mathcal S}^{\diam}(M)$ is a Gelfand ring, so each prime ideal of ${\mathcal S}^{\diam}(M)$ is contained in a unique maximal ideal.

\paragraph{} If $a\in M$, we denote the maximal ideal of all functions in ${\mathcal S}^{\diam}(M)$ vanishing at $a$ with $\gtmd_a$. The map $\phi:M\to\Specd(M),\ a\mapsto\gtmd_a$ embeds $M$ endowed with the Euclidean topology into $\Specd(M)$ as a dense subspace. 

\paragraph{}\label{closedspec}
Given a semialgebraic map $\varphi:N\to M$, there exists a unique continuous map $\Specd(\varphi):\Specd(N)\to\Specd(M)$, which extends $\varphi$. In addition, if $N\subset M$ and $N$ is closed in $M$, then $\Specd(N)\cong\cl_{\Specd(M)}(N)$ via $\Specd({\tt j})$ where ${\tt j}:N\hookrightarrow M$ is the inclusion map.

\paragraph{}{\em Semialgebraic depth.}\label{ht}
Let $\gtp\subset\gtq$ be two prime ideals of ${\mathcal S}^{\diam}(M)$. The \em coheight of $\gtp$ in $\gtq$ \em is the maximum of the integers $r\geq0$ such that there exists a chain of prime ideals $\gtp:=\gtp_0\subsetneq\cdots\subsetneq\gtp_r=:\gtq$. We define the \em coheight of a prime ideal $\gtp$ of ${\mathcal S}^{\diam}(M)$ \em as the coheight of $\gtp$ in the unique maximal ideal of ${\mathcal S}^{\diam}(M)$ containing $\gtp$. 

An ideal $\gta$ of ${\mathcal S}(M)$ is a \em $z$-ideal \em if given $f,g\in {\mathcal S}(M)$ such that $Z(f)\subset Z(g)$ and $f\in\gta$, then $g\in\gta$. If $M$ is locally compact, all prime ideals of ${\mathcal S}(M)$ are $z$-ideals by \cite[2.6.6]{bcr}.

The \em semialgebraic depth \em of a prime ideal $\gtp$ of ${\mathcal S}(M)$ is $\dgt_M(\gtp):=\min\{\dim Z(f):\, f\in\gtp\}$. If $\gtp\subset\gtq$ are prime $z$-ideals of ${\mathcal S}(M)$, the coheight of $\gtp$ in $\gtq$ is $\leq\dgt_M(\gtp)-\dgt_M(\gtq)$ (see \cite[4.14(i)]{fe1}). 

\subsection{Maximal spectra of rings of semialgebraic functions}\label{spectracomp}\setcounter{paragraph}{0}
Denote the collection of all maximal ideals of ${\mathcal S}^{\diam}(M)$ with $\betad M$ and consider in $\betad M$ the topology induced by the Zariski topology of $\Specd(M)$. Given $f\in{\mathcal S}^{\diam}(M)$, we denote ${\mathcal D}_{\betad M}(f):=\Dd_{\Specd(M)}(f)\cap\betad M$ and ${\mathcal Z}_{\betad M}(f):=\betad M\setminus{\mathcal D}_{\betad M}(f)=\Zz_{\Specd(M)}(f)\cap\betad M$. By \cite[7.1.25(ii)]{bcr} $\betad M$ is a Hausdorff compactification of $M$. 

\paragraph{}\label{homeo}
The map $\Phi:\betas M\to\betaa M$ that associates with each maximal ideal $\gtm$ of ${\mathcal S}(M)$ the unique maximal ideal $\gtm^*$ of ${\mathcal S}^*(M)$ that contains the prime ideal $\gtm\cap{\mathcal S}^*(M)$ is a homeomorphism. In particular, $\Phi(\gtm_a)=\gtm_a^*$ for all $a\in M$. We denote the maximal ideals of ${\mathcal S}^*(M)$ with $\gtm^*$ where $\gtm$ is the unique maximal ideal of ${\mathcal S}(M)$ such that $\gtm\cap{\mathcal S}^*(M)\subset\gtm^*$.

\paragraph{}\label{cocr} 
The inclusion map $\R\hookrightarrow{\mathcal S}^*(M)/\gtm^*,\, r\mapsto r+\gtm^*$ is an isomorphism of ordered fields because ${\mathcal S}^*(M)/\gtm^*$ is an Archimedean extension of $\R$. As $\R$ admits a unique automorphism, there is no ambiguity to refer to $f+\gtm^*$ as a real number for every $f\in{\mathcal S}^*(M)$. In particular, we identify $f+\gtm_a^*$ with $f(a)$ for all $a\in M$. Thus, each $f\in{\mathcal S}^*(M)$ defines a (unique) natural extension $\widehat{f}:\betaa M\to\R,\, \gtm^*\to f+\gtm^*$, which is continuous because given real numbers $r<s$, we have $\widehat{f}^{-1}((r, s))={\mathcal D}_{\betaa M}(f-r+|f-r|,s-f+|s-f|)$.

\paragraph{}\label{closedbeta1}
If $\varphi:N\to M$ is a semialgebraic map between semialgebraic sets $N$ and $M$, then $\Speca(\varphi):\Speca(N)\to\Speca(M)$ maps $\betaa N$ into $\betaa M$, so we denote the restriction of $\Speca(\varphi)$ to $\betaa N$ with $\betaa \varphi:\betaa N\to\betaa M$. Let $C,C_1,C_2$ be closed semialgebraic subsets of the semialgebraic set $M$ and ${\tt j}:C\hookrightarrow M$ the inclusion map. Then 
\begin{itemize}
\item[(i)] The space $\betaa\, C$ is homeomorphic to $\cl_{\betaa M}(C)\subset\betaa M$ via $\betaa\,{\tt j}:\betaa C\to\betaa M$. 
\item[(ii)] $\cl_{\betaa M}(C_1\cap C_2)=\cl_{\betaa M}(C_1)\cap\cl_{\betaa M}(C_2)$.
\end{itemize}

\paragraph{}\label{freefixed}
The zero set of a prime ideal $\gtp$ of ${\mathcal S}^{\diam}(M)$ provides no substantial information about $\gtp$ because it is either a singleton or the empty set. An ideal $\gta$ of ${\mathcal S}^{\diam}(M)$ is said to be \em fixed \em if all functions in $\gta$ vanish simultaneously at some point of $M$. Otherwise the ideal $\gta$ is \em free\em. The fixed maximal ideals of the ring ${\mathcal S}^{\diam}(M)$ are those of the form $\gtm_a^{\diam}$ where $a\in M$. The equality $\gtm\cap{\mathcal S}^*(M)=\gtm^*$ characterizes the fixed maxi\-mal ideals of ${\mathcal S}^{\diam}(M)$ (see \cite[3.7]{fg5}). Namely,
$$
\text{$\gtm^*$ is a fixed ideal $\iff$ $\gtm$ is a fixed ideal $\iff$ $\gtm\cap{\mathcal S}^*(M)=\gtm^*$ $\iff$ $\hgt(\gtm)=\hgt(\gtm^*)$}.
$$

\section{Points of the remainder associated with formal and semialgebraic paths}\label{s3}

In this section we analyze some topological properties of the set of the points of the remainder $\partial M:=\beta_s^*M\setminus M$ associated with formal paths. For simplicity we assume $M\subset\R^m$ bounded.

\subsection{Extension of coefficients}\label{eoc} 
Let $F$ be a real closed field containing $\R$. There exists a (unique) semialgebraic subset $M_{F}\subset F^m$ called \em extension of $M$ to $F$ \em that satisfies $M=M_{F}\cap\R^m$. The extension of semialgebraic sets depicts the expected behavior with respect to boolean and topological operations, Transfer Principle, etc. \cite[\S5.1-3]{bcr}. Given a semialgebraic map $f:M\to N$, there exists a unique semialgebraic map $f_{F}:M_{F}\to N_{F}$ called \em extension of $f$ to $F$ \em that fulfills $f_{F}|_M=f$. The extension of semialgebraic maps enjoys the expected behavior with respect to composition, direct and inverse images, injectivity, surjectivity, continuity, etc. \cite[\S5.1-3]{bcr}. By \cite[7.3.1]{bcr} the extension of semialgebraic functions to $F$ induces a well-defined $\R$-monomorphism ${\tt i}_{M,F}:{\mathcal S}(M)\hookrightarrow{\mathcal S}(M_{F}),\ f\mapsto f_{F}$. Composing it with the evaluation homomorphism ${\rm ev}_{M_F,\tt p}:{\mathcal S}(M_F)\to F,\ g\mapsto g({\tt p})$ for ${\tt p}\in M_{F}$, we get the $\R$-homomorphism 
$$
\psi_{\tt p}:={\rm ev}_{M_F,\tt p}\circ {\tt i}_{M,F}:{\mathcal S}(M)\to F,\ f\mapsto f_F({\tt p}).
$$ 
Denote the restriction of the linear projection onto the $i$th coordinate to $M$ with $\pi_i:M\to\R$. In \cite[Intr. Lem. 1, p.3]{fe3} it is proved that if ${\tt p}:=({\tt p}_1,\ldots,{\tt p}_m)\in M_F$, the $\R$-homomorphism $\psi_{\tt p}$ is the unique one satisfying $\pi_i\mapsto {\tt p}_i$ for $i=1,\ldots,m$.

\subsection{Formal paths}\label{picture} 

As usual $\R[[\t]]$ stands for the ring of formal power series in one variable with coefficients in $\R$ and $\R((\t))$ for its field of fractions. We say that a formal power series is \em algebraic \em if it is algebraic over the field of rational functions $\R(\t):=\qf(\R[\t])$. The subring (resp. subfield) of $\R[[\t]]$ (resp. $\R((\t))$) of all algebraic series is denoted with $\R[[\t]]_{\rm alg}$ (resp. $\R((\t))_{\rm alg}$). Given a formal power series $\xi\in\R((\t))$, we denote its order with $\omega(\xi)$ and the $k$-th power of the maximal ideal $(\t)$ of $\R[[\t]]$ with $(\t)^k$. We endow the previous rings with their respective unique orderings $\leq$ in which ${\tt t}$ is positive and infinitesimal with respect to $\R$. We denote the real closed field of Puiseux series with $F_1:=\R((\t^*))$ and the real closed field of algebraic Puiseux series with $F_0:=\R((\t^*))_{\rm alg}$. A \em formal path \em is a tuple $\alpha:=(\alpha_1,\ldots,\alpha_m)\in\R[[\t]]^m$. If $\alpha\in\R[[\t]]_{\rm alg}^m$, there exists $\veps>0$ such that the map $[0,\veps]\to\R^m,\ t\mapsto\alpha(t)$ is semialgebraic. Conversely, each semialgebraic map $\alpha:[0,1]\to\R^m$ defines an element $\alpha\in\R[[\t]]_{\rm alg}^m$. The elements of $\R[[\t]]_{\rm alg}^m$ are called \em semialgebraic paths\em. 

\subsection{Free maximal ideals associated with formal and semialgebraic paths}

Let $\alpha\in M_{F_1}$ be a formal path. Observe that $\alpha(0)\in\cl(M)$. By \ref{eoc} there exists a unique homomorphism $\psi_\alpha:{\mathcal S}(M)\to F_1$ such that $\psi_\alpha(\pi_i)=\alpha_i$. It holds: $\psi_\alpha({\mathcal S}^*(M))\subset\R[[{\tt t}^*]]$. 

Let $f\in{\mathcal S}^*(M)$ and $L>0$ be a constant such that $|f|<L$. Then $L-f>0$ and $f+L>0$. Pick $h_1,h_2\in{\mathcal S}^*(M)$ such that $h_1^2=L-f$ and $h_2^2=f+L$. Then
$$
L-\psi_\alpha(f)=\psi_\alpha(h_1^2)=\psi_\alpha(h_1)^2\geq0\quad\text{and}\quad\psi_\alpha(f)+L=\psi_\alpha(h_2^2)=\psi_\alpha(h_2)^2\geq0,
$$
so $|\psi_\alpha(f)|\leq L$. Consequently, $\psi_\alpha(f)\in\R[[\t^*]]$.

\subsubsection{Free maximal ideals associated with formal paths}\label{mifp} 
Consider the `evaluation' 
$$
{\rm ev}_0:\R[[\t^*]]\to\R,\ \zeta\mapsto\zeta(0)
$$ 
and the $\R$-epimorphism
$$
\varphi_\alpha:={\rm ev}_0\circ\psi_\alpha|_{{\mathcal S}^*(M)}:{\mathcal S}^*(M)\to\R,\ f\mapsto({\rm ev}_0\circ\psi_\alpha)(f)=\psi_\alpha(f)(0). 
$$
Then $\gtm_\alpha^*:=\ker(\varphi_\alpha)$ is a maximal ideal of ${\mathcal S}^*(M)$. As one can expect, $\gtm_{\alpha}^*=\gtm_{\alpha(0)}^*$ if $\alpha(0)\in M$ and $\gtm^*_\alpha$ is a free maximal ideal of ${\mathcal S}^*(M)$ when $\alpha(0)\in\cl(M)\setminus M$. In the latter case we call $\gtm_\alpha^*$ \em the free maximal ideal of ${\mathcal S}^*(M)$ associated with $\alpha$\em. We denote the collection of all free maximal ideals of ${\mathcal S}^*(M)$ associated with formal paths with $\widehat{\partial}M\subset\partial M$.

Let us find the maximal ideal $\gtm_\alpha$ of ${\mathcal S}(M)$ corresponding to $\gtm_\alpha^*$ via the homeomorphism $\Phi$ introduced in \ref{homeo}. We call $\gtm_\alpha$ \em the free maximal ideal of ${\mathcal S}(M)$ associated with $\alpha$\em. 

\begin{prop}\label{pmtalpha}
Let $\alpha\in M_{F_1}$ be a formal path such that $\alpha(0)\not\in M$. Then $\gtm_\alpha=\ker(\psi_\alpha)$ is the free maximal ideal of ${\mathcal S}(M)$ satisfying $\gtm_{\alpha}\cap{\mathcal S}^*(M)\subset\gtm_{\alpha}^*$. 
\end{prop}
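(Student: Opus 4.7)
The proposition makes three assertions: the intersection condition $\ker(\psi_\alpha)\cap{\mathcal S}^*(M)\subset\gtm_\alpha^*$, the equality $\gtm_\alpha=\ker(\psi_\alpha)$ (which entails that $\ker(\psi_\alpha)$ is a maximal ideal of ${\mathcal S}(M)$), and freeness. The plan is to dispose of the intersection condition and of freeness right away. If $f\in\ker(\psi_\alpha)\cap{\mathcal S}^*(M)$, then boundedness of $f$ forces $\psi_\alpha(f)\in\R[[\t^*]]$ (as shown in the paragraph preceding the definition of $\varphi_\alpha$), so $\varphi_\alpha(f)=\psi_\alpha(f)(0)=0$ and $f\in\ker(\varphi_\alpha)=\gtm_\alpha^*$. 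Freeness is also immediate: if $\ker(\psi_\alpha)=\gtm_a$ for some $a\in M$, then each $\pi_i-a_i$ lies in $\ker(\psi_\alpha)$, so $\alpha_i=\psi_\alpha(\pi_i)=a_i\in\R$; this makes $\alpha$ the constant path $a\in M$, contradicting $\alpha(0)\notin M$.

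One half of the main equality, $\ker(\psi_\alpha)\subset\gtm_\alpha$, is purely formal and rests on the Gelfand property of ${\mathcal S}(M)$ and ${\mathcal S}^*(M)$ combined with the $\Phi$-correspondence of \ref{homeo}. Since ${\mathcal S}(M)$ is Gelfand, the prime ideal $\ker(\psi_\alpha)$ is contained in a unique maximal ideal $\gtm$. Then $\Phi(\gtm)$ is the unique maximal ideal of ${\mathcal S}^*(M)$ containing $\gtm\cap{\mathcal S}^*(M)\supset\ker(\psi_\alpha)\cap{\mathcal S}^*(M)$. The intersection step already shows that $\gtm_\alpha^*$ is also a maximal ideal of ${\mathcal S}^*(M)$ containing the prime $\ker(\psi_\alpha)\cap{\mathcal S}^*(M)$, so Gelfand uniqueness in ${\mathcal S}^*(M)$ forces $\Phi(\gtm)=\gtm_\alpha^*$, whence $\gtm=\gtm_\alpha$.

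The reverse inclusion $\gtm_\alpha\subset\ker(\psi_\alpha)$ is equivalent to the maximality of $\ker(\psi_\alpha)$, or equivalently to $\psi_\alpha({\mathcal S}(M))$ being a subfield of $F_1$. Given $f\in{\mathcal S}(M)$ with $\xi:=\psi_\alpha(f)\ne 0$, the goal is to produce $g\in{\mathcal S}(M)$ with $\psi_\alpha(fg)=1$; combined with $\ker(\psi_\alpha)\subset\gtm_\alpha$ already established, this will force $f\notin\gtm_\alpha$, because otherwise $1=fg-(fg-1)\in\gtm_\alpha$. The nonvanishing of $\xi$ precisely says that the zero set $Z(f)\subset M$ is formally separated from $\alpha$, and the construction is to pick a semialgebraic cut-off $\phi\in{\mathcal S}(M)$ equal to $1$ on a semialgebraic neighborhood of the formal germ of $\alpha$ and equal to $0$ on a semialgebraic neighborhood of $Z(f)$, and to set $g:=\phi/f$ on $\{f\ne 0\}$, extended by $0$ on $Z(f)$; then $fg=\phi$ and $\psi_\alpha(fg)=\psi_\alpha(\phi)=1$.

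The main obstacle is precisely this cut-off construction: producing $\phi$ with both the formal constancy $\psi_\alpha(\phi)=1$ along $\alpha$ and vanishing near $Z(f)$, while making sure that $\phi/f$ extends continuously across $Z(f)$. This requires Lojasiewicz-type decay estimates controlling how rapidly $\phi$ must tend to $0$ near $Z(f)$ relative to $|f|$, and a separate treatment of the algebraic case $\alpha\in\R[[\t]]_{\rm alg}^m$ (where ``neighborhood of the formal germ of $\alpha$'' becomes an honest semialgebraic tube around the curve $\alpha([0,\epsilon))\subset M$) versus the general formal case (where the argument takes place at the level of the extension $M_{F_1}$).
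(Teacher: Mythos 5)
Your treatment of the intersection condition, of freeness, and of the inclusion $\ker(\psi_\alpha)\subset\gtm_\alpha$ via the Gelfand property is correct. But the crux of the proposition is the maximality of $\ker(\psi_\alpha)$, and there your argument stops exactly where the real work begins: you reduce everything to producing a cut-off $\phi\in{\mathcal S}(M)$ with $\psi_\alpha(\phi)=1$ that vanishes near $Z(f)$ and decays fast enough for $\phi/f$ to extend continuously, and then you declare this construction to be ``the main obstacle'' without carrying it out. For a non-algebraic formal path $\alpha$ there is no curve $\alpha([0,\epsilon))$ inside $M$ at all, so the phrase ``semialgebraic neighborhood of the formal germ of $\alpha$'' has no a priori meaning; making it precise \emph{is} the content of the proof, and it cannot be waved through by invoking unspecified \L ojasiewicz estimates.

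The paper resolves this differently and more economically, in a way that also avoids having to invert $f$ at all. Arguing by contradiction, one takes a prime $\gtq\supsetneq\gtp_\alpha:=\ker(\psi_\alpha)$ and a bounded $f\in\gtq\setminus\gtp_\alpha$, writes $\psi_\alpha(f)=a\t^b+\cdots$ and $\|\alpha(\t)-p\|=c\t^d+\cdots$ with $p:=\alpha(0)$, and introduces the explicit semialgebraic set
$$
Z:=\Big\{x\in M:\ \tfrac{|a|}{2c^{b/d}}\|x-p\|^{b/d}\leq|f(x)|\Big\}.
$$
Comparison of leading terms shows $\alpha\in Z_{F_1}$, so $g:=\dist(\cdot,Z)$ lies in $\gtp_\alpha\subset\gtq$; on the other hand any common zero of $f$ and $g$ would lie in $Z$ with $f(x)=0$, forcing $x=p\notin M$. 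Hence $f^2+g^2\in\gtq$ has empty zero set and is a unit, a contradiction. This set $Z$ is precisely the rigorous substitute for your ``tube around the formal germ'': it is semialgebraic, it captures $\alpha$ at the level of $M_{F_1}$, and the \L ojasiewicz-type inequality is built into its definition rather than postponed. If you want to salvage your inversion strategy, you would still need something equivalent to this $Z$ to define where $\phi\equiv 1$, so the gap in your write-up is not a matter of routine detail but the essential step of the proof.
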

\begin{proof}
It is straightforward to check that $\gtp_\alpha:=\ker(\psi_\alpha)$ is a prime $z$-ideal and $\gtp_\alpha\cap{\mathcal S}^*(M)\subset\ker(\varphi_\alpha)=\gtm^*_\alpha$. Let us show next that $\gtp_\alpha$ is a maximal ideal. Otherwise let $\gtq$ be a prime ideal of ${\mathcal S}(M)$ such that $\gtp_\alpha\subsetneq\gtq$ and choose $f\in\gtq\setminus\gtp_\alpha$. Taking $f/(1+|f|)$ instead of $f$, we may assume that $f$ is bounded on $M$. Denote $p:=\alpha(0)$.

As $\psi_{\alpha}(f)\in\R[[\t^*]]\setminus\{0\}$, we write $\psi_{\alpha}(f)(\t):=a\t^b+\cdots$ where $a\neq 0$, $b:=\omega(\psi_{\alpha}(f))\in\Q^+$ and $\|\alpha(\t)-p\|:=c\t^d+\cdots$ where $c\neq 0$ and $d:=\omega(\|\alpha(\t)-p\|)\in\Q^+$. Consider
$$
Z:=\Big\{x\in M:\,\frac{|a|}{2c^{b/d}}\|x-p\|^{b/d}\leq |f(x)|\Big\}
$$ 
and pick $g:=\dist(\cdot,Z)\in{\mathcal S}(M)$, which satisfies $Z(g)=Z$. Observe 
$$
\frac{|a|}{2c^{b/d}}\|\alpha(\t)-p\|^{b/d}=\frac{|a|}{2}\t^b+\cdots\qquad\text{and}\qquad\psi_{\alpha}(|f|)(\t)=|a|\t^b+\cdots,
$$
so $\alpha\in Z_{F_1}$. Consequently, $\psi_\alpha(g)=g_{F_1}(\alpha)=0$ or equivalently $g\in\gtp_\alpha\subset\gtq$.

As $p\not\in M$, the zero set of $h:=f^2+g^2\in\gtq$ is the empty-set, so $h\in\gtq$ is a unit in ${\mathcal S}(M)$, which is a contradiction. Thus, $\gtp_\alpha$ is the maximal ideal of ${\mathcal S}(M)$ satisfying $\gtp_\alpha\cap{\mathcal S}^*(M)\subset\gtm^*_\alpha$. 
\end{proof}
\begin{remark}\label{pmtalphar}
The reader can check that \em if $\alpha(0)\in M$, the ideal $\gtp_\alpha=\ker(\psi_\alpha)$ is a prime $z$-ideal of ${\mathcal S}(M)$ of coheight $1$ contained in $\gtm_{\alpha(0)}$\em. 
\end{remark}

\begin{cor}\label{ppp}
Let $\alpha\in M_{F_1}$ be a formal path such that $\alpha(0)\in\cl(M)\setminus M$. Then there does not exist any prime ideal between $\gtm_\alpha\cap{\mathcal S}^*(M)$ and $\gtm_\alpha^*$.
\end{cor}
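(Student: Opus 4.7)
The plan is to identify the quotient domain $A := {\mathcal S}^*(M)/(\gtm_\alpha\cap{\mathcal S}^*(M))$ with a rank-one valuation ring of the field $K := {\mathcal S}(M)/\gtm_\alpha$ and then invoke the fact that such a ring has Krull dimension one. Since primes of ${\mathcal S}^*(M)$ lying between $\gtm_\alpha\cap{\mathcal S}^*(M)$ and $\gtm_\alpha^*$ correspond bijectively to primes of $A$, no prime can lie strictly between them.

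First I would use Proposition \ref{pmtalpha} to factor $\psi_\alpha$ through an embedding $K = {\mathcal S}(M)/\gtm_\alpha \hookrightarrow F_1 = \R((\t^*))$, which restricts to an embedding $A \hookrightarrow K$. Letting $v$ denote the $\t$-adic valuation on $F_1$, whose valuation ring $\R[[\t^*]]$ coincides with the convex hull of $\R$ in $F_1$, the restriction $v|_K$ has valuation ring $V_K := K \cap \R[[\t^*]]$, equal to the convex hull of $\R$ in $K$. The core of the argument is proving $A = V_K$ inside $K$.

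The inclusion $A \subseteq V_K$ is immediate: any $f \in {\mathcal S}^*(M)$ with $|f| \leq L$ on $M$ satisfies $L \pm f = h_\pm^2$ for some $h_\pm \in {\mathcal S}(M)$, whence $|\psi_\alpha(f)| \leq L$ in $F_1$. The reverse inclusion $V_K \subseteq A$ is where the main obstacle sits: given $\bar{g} \in V_K$ with $|\psi_\alpha(g)| \leq L$, I would consider the truncation $g_\star := \max(-L, \min(g, L)) \in {\mathcal S}^*(M)$. A direct computation gives $|g - g_\star| = (|g| - L)^+$, whose $\psi_\alpha$-image $(|\psi_\alpha(g)| - L)^+$ vanishes in $F_1$, so $(|g| - L)^+ \in \gtm_\alpha$. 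Invoking the $z$-ideal property of $\gtm_\alpha$ established in Proposition \ref{pmtalpha}, together with the equality of zero sets $Z((|g| - L)^+) = Z(g - g_\star)$, one deduces $g - g_\star \in \gtm_\alpha$, so that $\bar{g} = \bar{g}_\star$ indeed lies in $A$.

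To conclude, since $\alpha(0) \in \cl(M) \setminus M$ and $M$ is bounded, the function $x \mapsto \|x - \alpha(0)\|$ belongs to ${\mathcal S}^*(M)$, and its image in $K$ is a nonzero infinitesimal, so the value group $v(K^*) \subset \Q$ is non-trivial and hence archimedean. Therefore $A = V_K$ is a rank-one valuation ring of $K$, whose only prime ideals are $(0)$ and its unique maximal ideal (which must correspond to $\gtm_\alpha^*/(\gtm_\alpha \cap {\mathcal S}^*(M))$); pulling back to ${\mathcal S}^*(M)$ yields the corollary.
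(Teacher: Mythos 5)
Your argument is correct, and it takes a genuinely different route from the paper. The paper lifts the (necessarily totally ordered) family of primes $\gtm_\alpha\cap{\mathcal S}^*(M)=\gtp_0\subsetneq\cdots\subsetneq\gtp_r=\gtm_\alpha^*$ to a suitable semialgebraic compactification $(X,{\tt j})$ of $M$ using \cite[1.A.2]{fg2}, identifies the bottom prime of the lifted chain with $\ker(\psi_{{\tt j}\circ\alpha})$, and then reruns the zero-set construction of Proposition \ref{pmtalpha} inside ${\mathcal S}(X)$ to show that the prime immediately above it is already the maximal ideal $\gtm_{({\tt j}\circ\alpha)(0)}$, forcing $r=1$. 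You instead stay inside ${\mathcal S}^*(M)$ and prove that the quotient $A={\mathcal S}^*(M)/(\gtm_\alpha\cap{\mathcal S}^*(M))$ equals the convex hull of $\R$ in $K={\mathcal S}(M)/\gtm_\alpha\hookrightarrow F_1$, i.e.\ the valuation ring $K\cap\R[[\t^*]]$ of the order valuation restricted to $K$; since the value group embeds in $\Q$ and is nontrivial (witnessed by the class of $\|x-\alpha(0)\|$), this ring has rank one and hence exactly two primes. All the ingredients check out: the inclusion $A\subseteq K\cap\R[[\t^*]]$ is the boundedness argument already in \ref{mifp}, the reverse inclusion via the truncation $g_\star$ is sound (indeed one can shortcut it, since $\psi_\alpha$ commutes with $\max$ and $\min$ one gets $\psi_\alpha(g_\star)=\psi_\alpha(g)$ directly, without invoking the $z$-ideal property), and the maximal ideal of $A$ is visibly $\gtm_\alpha^*/\gtp_0=\ker({\rm ev}_0\circ\psi_\alpha)/\gtp_0$. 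What your approach buys is self-containedness: it avoids the compactification machinery of \cite{fg2} and makes transparent \emph{why} there is nothing in between (the residue domain is a rank-one valuation ring, consistent with $\qf({\mathcal S}^*(M)/\gtp_\alpha)=\R((\t^*))_{\rm alg}$ noted in Remark \ref{etac}). What the paper's route buys is uniformity with the techniques used elsewhere (Lemma \ref{delrel}, Corollary \ref{eum}), where passing to closures in $\betaa M$ of closed semialgebraic subsets is the recurring tool.
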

\begin{proof}
Let $\gtp_0:=\gtm_\alpha\cap{\mathcal S}^*(M)\subsetneq\cdots\subsetneq\gtp_r=\gtm_\alpha^*$ be the collection of all prime ideals of ${\mathcal S}^*(M)$ containing $\gtp_0$. By \cite[1.A.2]{fg2} there exist a semialgebraic compactification $(X,{\tt j})$ of $M$ and a chain of prime ideals $\gtq_0\subsetneq\cdots\subsetneq\gtq_r$ of ${\mathcal S}(X)$ such that $\gtq_i=\gtp_i\cap{\mathcal S}(X)$. Assume $X\subset\R^m$ and notice that $\gtq_0=\ker(\psi_{{\tt j}\circ\alpha})$ where ${\tt j}\circ\alpha\in\R[[\t^*]]^m$. After reparameterizing $\alpha$ if necessary, we may assume ${\tt j}\circ\alpha\in\R[[\t]]^m$. Proceeding similarly to the proof of Proposition \ref{pmtalpha} one finds a semialgebraic function $h\in\gtq_1$ such that $Z(h)=\{({\tt j}\circ\alpha)(0)\}$, so $\gtq_1$ is the maximal ideal of ${\mathcal S}(X)$ associated with the point $({\tt j}\circ\alpha)(0)$ and $r=1$, as required.
\end{proof}

\subsubsection{Free maximal ideals associated with semialgebraic paths}\label{misp}
The collection of all free maximal ideals $\gtm^*_\alpha$ of ${\mathcal S}^*(M)$ corresponding to semialgebraic paths $\alpha\in M_{F_0}$ is denoted with $\widetilde{\partial} M$. We have $\widetilde{\partial} M\subset\widehat{\partial}M\subset\partial M$ and in general both inclusions are strict and the differences are `large' (see \ref{diferencias}). The uniqueness (see \ref{eoc}) of the homomorphism $\psi_\alpha$ guarantees that if $\alpha\in M_{F_0}$ is a semialgebraic path, the $\R$-homomorphism $\psi_\alpha:{\mathcal S}(M)\to F_0$ is defined by $f\mapsto f\circ\alpha$. If $\alpha\in M_{F_0}$ and $\alpha(0)\in\cl(M)\setminus M$, then
\begin{align*}
\gtm_\alpha^*&=\{f\in{\mathcal S}^*(M):\,\lim_{t\to0^+}(f\circ\alpha)(t)=0\},\\
\gtm_\alpha&=\{f\in{\mathcal S}(M):\,\exists\,\veps>0\text{ such that }(f\circ\alpha)|_{(0,\,\veps]}=0\}.
\end{align*}
\begin{remark}\label{etac}
The reader can check that \em the prime $z$-ideals of ${\mathcal S}(M)$ whose semialgebraic depth is equal to $1$ are the prime ideals $\gtp_\alpha:=\ker(\psi_\alpha)$ where $\alpha\in M_{F_0}$ is a semialgebraic path\em. In addition, $\qf({\mathcal S}^*(M)/\gtp_\alpha)=\R((\t^*))_{\rm alg}$.
\end{remark}

\subsection{Set of endpoints of a semialgebraic set}
We finish this section with some properties of the sets $\eta(M)$ and $\eta(\betaa M)$ of endpoints of a semialgebraic set $M$ and its semialgebraic Stone-C\v{e}ch compactification $\betaa M$.

\begin{cor}\label{eum}
We have:
\begin{itemize}
\item[(i)] $\eta(\betaa M)=\eta(M)\cup(\cl_{\betaa M}(\ol{M}_{\leq1})\setminus\ol{M}_{\leq1})$ is a finite set.
\item[(ii)] For each point $p\in \eta(M)$ the maximal ideal $\gtm_p^*$ of ${\mathcal S}^*(M)$ contains properly only one prime ideal of ${\mathcal S}^*(M)$.
\item[(iii)] $M\setminus\eta(M)$ is the set of closed branching points of $\Speca(M)$.
\end{itemize}
\end{cor}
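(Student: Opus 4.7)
The plan is to prove parts (i), (ii), (iii) in order, since (ii) feeds directly into (iii).

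For (i), to see $\eta(M)\subset\eta(\betaa M)$, take a semialgebraic chart $f:U\to[0,1)$ at $p\in\eta(M)$ and shrink to the compact subneighborhood $V:=f^{-1}([0,1/2])$: by \ref{closedbeta1}(i) one has $\cl_{\betaa M}(V)=\betaa V=V$, so the interior of $V$ in $\betaa M$ is already an open neighborhood of $p$ homeomorphic to $[0,1/2)\cong[0,1)$. For the second piece, the closed semialgebraic set $\ol{M}_{\leq1}$ is locally compact by \ref{curvas}(i) and decomposes into finitely many semialgebraic arcs; for each non-compact end of such an arc Corollary \ref{ppp}, applied to its semialgebraic parametrization, contributes exactly one point to $\cl_{\betaa M}(\ol{M}_{\leq1})\setminus\ol{M}_{\leq1}$ whose $[0,1)$-neighborhood inside $\betaa\ol{M}_{\leq1}$ transfers to $\betaa M$ because such free ends avoid the finite set $M_{\geq2}\cap\ol{M}_{\leq1}$ (\ref{curvas}(ii)) and therefore $\cl_{\betaa M}(M_{\geq2})$. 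For the reverse inclusion, if $q\in\eta(\betaa M)$ with chart $g:W\to[0,1)$, the dense open subset $W\cap M$ of $W$ is topologically $1$-dimensional, so every point of $W\cap M$ has local dimension $1$ in $M$ and therefore $W\cap M\subset\ol{M}_{\leq1}$; this forces $q\in\cl_{\betaa M}(\ol{M}_{\leq1})$, and a standard semialgebraic triangulation upgrades the topological chart at $q\in M$ to a semialgebraic one, placing $q$ in $\eta(M)$. Finiteness of both summands is immediate from semialgebraic triangulability.

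For (ii), keep the compact $V\cong[0,1/2]$ from (i). Since $\cl_{\betaa M}(V)=V$, \ref{closedbeta1}(i) identifies the primes of ${\mathcal S}^*(M)$ contained in $\gtm_p^*$ with the primes of ${\mathcal S}(V)={\mathcal S}^*(V)$ contained in $\gtm_p$, all of which are $z$-ideals by \ref{ht}. Any semialgebraic path $\alpha\in V_{F_0}$ with $\alpha(0)=0$ has image in $[0,\veps)$ for some $\veps>0$, so by Remark \ref{etac} the prime $\gtp_\alpha=\{f\in{\mathcal S}(V):\exists\,\veps>0,\ f|_{[0,\veps)}=0\}$ is independent of $\alpha$; the coheight bound in \ref{ht} together with $\dim V=1$ then forces this common $\gtp_\alpha$ to be the only prime $z$-ideal strictly between $(0)$ and $\gtm_p$, which yields the claim after contracting back to ${\mathcal S}^*(M)$.

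For (iii), (ii) immediately excludes $\gtm_p^*$ with $p\in\eta(M)$ from being a closed branching point. Conversely, for $p\in M\setminus\eta(M)$ a semialgebraic cell decomposition at $p$ produces two semialgebraic paths $\alpha_1,\alpha_2\in M_{F_0}$ through $p$ whose germs are geometrically incomparable: either $\dim M_p\geq2$, so two transverse paths inside a $2$-cell work, or $\dim M_p=1$ but the germ of $M$ at $p$ splits into at least two half-branches (precisely because $p\notin\eta(M)$). The contractions $\gtp_{\alpha_i}\cap{\mathcal S}^*(M)$ are then incomparable primes strictly contained in $\gtm_p^*$, making $\gtm_p^*$ a branching point. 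Finally one must exclude free maximal ideals from the closed branching points: for $\gtm^*\in\widehat\partial M$ this follows from Corollary \ref{ppp} combined with the chain structure of the primes below $\gtm\cap{\mathcal S}^*(M)$ inherited from the valuation of $F_1$, and for a general free $\gtm^*$ one reduces to this case via a semialgebraic compactification $(X,{\tt j})$ as in \cite[1.A.2]{fg2}. I expect this last step---ruling out branching at arbitrary free maximal ideals outside $\widehat\partial M$---to be the main obstacle, since those ideals have a less rigid description than the formal-path ones.
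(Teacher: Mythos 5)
Your parts (i) and (ii) follow essentially the paper's route and are sound; in (ii) your use of Remark \ref{etac} to identify every prime strictly below $\gtm_0$ in ${\mathcal S}([0,1])$ with the single ideal $\gtp_\alpha$ is in fact a slightly more self-contained version of the argument (the paper instead quotes the minimality result \cite[4.5]{fe1} and the endpoint structure of $[0,1]$). Two small citation slips: in (i) the fact that each free end of an arc contributes exactly one remainder point is $\betaa\,(0,1]\cong[0,1]$ from \cite{fg5}, not Corollary \ref{ppp}, which concerns chains of primes; and in (ii) the identification of the primes under $\gtm_p^*$ with those of ${\mathcal S}(V)$ rests on \ref{closedspec} (Zariski spectra), not \ref{closedbeta1}(i) (maximal spectra), together with the observation that every prime contained in $\gtm_p^*$ contains $\ker({\mathcal S}^*(M)\to{\mathcal S}(V))$ because $V$ is a neighborhood of $p$. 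The forward half of (iii) matches the paper's curve-selection argument.

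The genuine gap is exactly where you flagged it: the converse half of (iii), ruling out branching at free maximal ideals. Corollary \ref{ppp} only tells you that $\gtm_\alpha\cap{\mathcal S}^*(M)$ has coheight $1$ in $\gtm_\alpha^*$; it does not exclude a second prime $\gtq\subsetneq\gtm_\alpha^*$ incomparable with $\gtm_\alpha\cap{\mathcal S}^*(M)$, and the ``chain structure inherited from the valuation of $F_1$'' only governs the primes comparable with $\gtm_\alpha\cap{\mathcal S}^*(M)$ (in a real closed ring the primes \emph{containing} a fixed prime always form a chain; the danger sits below $\gtm_\alpha^*$, not above $\gtm_\alpha\cap{\mathcal S}^*(M)$). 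The missing ingredient, which also disposes of arbitrary free $\gtm^*$ in one stroke and makes any reduction to $\widehat{\partial}M$ unnecessary, is \cite[5.2(i)]{fe1}: every non-refinable chain of primes of ${\mathcal S}^*(M)$ ending at $\gtm^*$ contains the same subchain of primes containing $\gtm\cap{\mathcal S}^*(M)$. Since $\gtm^*$ is free, \ref{freefixed} gives $\gtm\cap{\mathcal S}^*(M)\subsetneq\gtm^*$, so every saturated chain ending at $\gtm^*$ passes through one and the same coheight-$1$ prime; hence any two primes strictly below $\gtm^*$ lie under a common coheight-$1$ prime and cannot witness branching. Without this rigidity statement (or an equivalent), your proposed reduction via a compactification $(X,{\tt j})$ does not close the argument, since \cite[1.A.2]{fg2} only lifts individual chains and says nothing about how two incomparable primes below $\gtm^*$ interact.
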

\begin{proof}
(i) As $M=M_{\geq2}\cup\ol{M}_{\leq1}$, it holds $\betaa M=\cl_{\betaa M}(M_{\geq2})\cup\cl_{\betaa M}(\ol{M}_{\leq1})$. Notice that no point of $\cl_{\betaa M}(M_{\geq2})$ has a neighborhood homeomorphic to $[0,1)$. Now, one shows (following the proof of \cite[4.19]{fg5}) that the set of endpoints of $\betaa M$ equals $(\cl_{\betaa M}(\ol{M}_{\leq1})\setminus\ol{M}_{\leq1})\cup\eta(M)$ and $(\cl_{\betaa M}(\ol{M}_{\leq1})\setminus\ol{M}_{\leq1})$ is a finite set. In addition, $\eta(M)$ is a semialgebraic set of dimension $0$, so it is also a finite set.

(ii) Let $Z\subset M$ be a compact semialgebraic neighborhood of $p$ equipped with a (semialgebraic) homeomorphism $Z\to[0,1]$ that maps $p$ onto $0$. Let $T:=\cl(M\setminus Z)\cap M$ and note that $p\not\in T$ and $M=T\cup Z$. Then $\Speca(M)=\cl_{\Speca(M)}(T)\cup\cl_{\Speca(M)}(Z)$. We have $p\equiv\gtm_p^*\in\Speca(M)\setminus\cl_{\Speca(M)}(T)=\cl_{\Speca(M)}(Z)\setminus\cl_{\Speca(M)}(T)$. 

Next, $\cl_{\Speca(M)}(Z)$ is by \ref{closedspec} homeomorphic to $\Speca(Z)\cong\Speca([0,1])$ and $\gtm^*_p$ is mapped to the maximal ideal $\gtm_0^*$ of $\Speca([0,1])$. As $I:=[0,1]$ is locally compact, we know by \ref{ht} that $0=\dgt_I(\gtm_0^*)<\dgt_I(\gtp)\leq1$ for each prime ideal $\gtp$ of ${\mathcal S}(I)$ (properly) contained in $\gtm_0^*$. Thus, $\dgt_I(\gtp)=1$ and $\gtp$ is by \cite[4.5]{fe1} a minimal prime ideal contained in $\gtm_0^*$. As $0$ is an endpoint of $[0,1]$, we deduce that $\gtp$ is the unique prime ideal (properly) contained in $\gtm_0^*$. Therefore $\gtm_p^*$ contains only one prime ideal of ${\mathcal S}(M)$.

(iii) Fix $a\in M\setminus\eta(M)$. By the curve selection lemma \cite[2.5.5]{bcr} there exist two semialgebraic paths $\alpha_1,\alpha_2:[0,1]\to\R^m$ such that $\alpha_i(0)=a$, $\alpha_i((0,1])\subset M$ and $\alpha_1((0,1])\cap\alpha_2((0,1])=\varnothing$. Let ${\mathcal W}:=\{g\in{\mathcal S}^*(M):\ Z(g)=\varnothing\}$. As ${\mathcal S}(M)={\mathcal S}^*(M)_{{\mathcal W}}$ and $\gtm_a\cap{\mathcal S}^*(M)=\gtm_a^*$, there exists a one-to-one correspondence that preserves inclusions between the prime ideals of ${\mathcal S}(M)$ contained in $\gtm_a$ and those of ${\mathcal S}^*(M)$ contained in $\gtm_a^*$. Consider the prime $z$-ideals
$$
\gtp_{\alpha_i}:=\{f\in{\mathcal S}(M):\,\exists\,\veps>0\,|\ (f\circ\alpha_i)|_{(0,\,\veps)}=0\}.
$$
By Remark \ref{etac} $\dgt_M(\gtp_{\alpha_i})=1$ while $\dgt_M(\gtm_a)=0$. Thus, $\gtp_{\alpha_i}$ has coheight $1$ in $\gtm_a$ by \ref{ht}, so $\gtp_{\alpha_i}\cap{\mathcal S}^*(M)$ has coheight $1$ in $\gtm_a^*$. Consequently, $\gtm_a^*$ is a closed branching point of $\Speca(M)$. 

Conversely, let $\gtm^*\in\Speca(M)$ be a closed branching point. By (ii) $\gtm^*\not\in\eta(M)$, so we have to check $\gtm^*\in M$. Suppose by contradiction $\gtm^*\in\Speca(M)\setminus M$. By \ref{freefixed} the unique maximal ideal $\gtm$ of ${\mathcal S}(M)$ with $\gtm\cap{\mathcal S}^*(M)\subset\gtm^*$ satisfies $\gtm\cap{\mathcal S}^*(M)\subsetneq\gtm^*$. By \cite[5.2(i)]{fe1} the subchain of prime ideals of ${\mathcal S}^*(M)$ containing $\gtm\cap{\mathcal S}^*(M)$ is the same for any non-refinable chain of prime ideals in ${\mathcal S}^*(M)$ ending at $\gtm^*$. As $\gtm\cap{\mathcal S}^*(M)\subsetneq\gtm^*$, the maximal ideal $\gtm^*$ only contains one prime ideal of coheight $1$, which is a contradiction. 
\end{proof}

\section{Topological properties of the remainder}\label{s4}

We study the topological properties of the remainder announced in the Introduction.

\subsection{Density of $\widetilde{\partial}M$ in $\partial M$.}\label{ppoints}
We prove first that $\widetilde{\partial} M$ is dense in $\partial M$. 

\begin{lem}\label{deltam}
\em (i) \em Let $f_i\in{\mathcal S}^*(M)$ and $\widehat{f_i}:\betaa M\to \R$ be the unique continuous extension of $f_i$ to $\betaa M$ for $i=1,\ldots,r$. Then $(\widehat{f_1},\ldots,\widehat{f_r})(\widetilde{\partial}M)=(\widehat{f_1},\ldots,\widehat{f_r})(\partial M)$.

\em (ii) \em A function $f\in{\mathcal S}^*(M)$ is a unit if and only if $Z(f)=\varnothing$ and $f\not\in\gtm_\alpha^*$ for all $\gtm_\alpha^*\in\widetilde{\partial}M$. 

\em (iii) \em The set $\widetilde{\partial}M$ is dense in $\partial M$.
\end{lem}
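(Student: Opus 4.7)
The plan is to deduce (ii) and (iii) from (i), so the bulk of the work is on (i). The inclusion $F(\widetilde{\partial}M) \subset F(\partial M)$ (writing $F := (\widehat{f_1}, \ldots, \widehat{f_r})$) is free since $\widetilde{\partial}M \subset \partial M$. For the reverse, I fix $\gtm^* \in \partial M$ and set $y_i := \widehat{f_i}(\gtm^*) \in \R$; my task is to produce a semialgebraic path $\alpha \in M_{F_0}$ with $\alpha(0) \in \cl(M) \setminus M$ and $\lim_{t \to 0^+} f_i(\alpha(t)) = y_i$ for each $i$, as this yields $\widehat{f_i}(\gtm_\alpha^*) = y_i$. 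The single observable that packages this is $g := \sum_{i=1}^r (f_i - y_i)^2 \in {\mathcal S}^*(M)$, which is bounded, nonnegative, and satisfies $\widehat{g}(\gtm^*) = 0$; the problem thereby reduces to constructing a semialgebraic path into $\cl(M) \setminus M$ along which $g \to 0$.

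I would work with the graph $\Gamma := \{(x, g(x)) : x \in M\} \subset \R^m \times [0, L]$, where $L := \sup_M g$, whose closure $\cl(\Gamma)$ is a compact semialgebraic subset of $\cl(M) \times [0, L]$ (using that $M$ is bounded). Setting $T := \{p \in \cl(M) : (p, 0) \in \cl(\Gamma)\}$ gives a compact semialgebraic set, and the \textbf{main obstacle} is to show $T \cap (\cl(M) \setminus M) \neq \varnothing$. If this fails, $T \subset M$ and continuity of $g$ forces $T = Z(g)$, so $Z(g)$ is compact inside $M$ and admits an open semialgebraic neighborhood $V$ with $\cl(V) \subset M$ (by compactness and separation from the disjoint closed set $\cl(M) \setminus M$). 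I then claim there exists $\delta > 0$ with $g \geq \delta$ on $M \setminus V$: otherwise a sequence $x_n \in M \setminus V$ with $g(x_n) \to 0$ has a cluster value $p \in \cl(M)$ with $(p, 0) \in \cl(\Gamma)$, so $p \in T = Z(g)$; under the assumption $T \subset M$, one gets $p \in M \cap \cl(M \setminus V) = M \setminus V$, while simultaneously $Z(g) \subset V$, a contradiction. Consequently $\widehat{g}^{-1}([0, \delta))$ is an open neighborhood of $\gtm^*$ in $\betaa M$ whose trace on $M$ lies in $V$; density of $M$ in $\betaa M$ yields $\widehat{g}^{-1}([0, \delta)) \subset \cl_{\betaa M}(\cl V)$, and \ref{closedbeta1}(i) applied to the compact $\cl V$ identifies $\cl_{\betaa M}(\cl V)$ with $\betaa(\cl V) = \cl V \subset M$. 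This forces $\gtm^* \in M$, contradicting $\gtm^* \in \partial M$.

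Picking $p \in T \cap (\cl(M) \setminus M)$, the point $(p, 0)$ lies in $\cl(\Gamma) \setminus \Gamma$, so the semialgebraic curve selection lemma \cite[2.5.5]{bcr} produces a semialgebraic path $\gamma = (\alpha, \rho) : [0, 1] \to \cl(\Gamma)$ with $\gamma(0) = (p, 0)$ and $\gamma((0, 1]) \subset \Gamma$. The projection $\alpha = \pi_1 \circ \gamma$ is a semialgebraic path with $\alpha(0) = p \in \cl(M) \setminus M$, $\alpha((0, 1]) \subset M$, and $g(\alpha(t)) = \rho(t) \to 0$ as $t \to 0^+$; hence $f_i(\alpha(t)) \to y_i$ and $\widehat{f_i}(\gtm_\alpha^*) = y_i$ for every $i$, completing (i).

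For (ii), the forward direction is standard. Conversely, assume $Z(f) = \varnothing$, $f \notin \gtm_\alpha^*$ for every $\gtm_\alpha^* \in \widetilde{\partial}M$, yet $f$ is not a unit; then $f$ lies in some maximal ideal $\gtm^*$, which is free because $Z(f) = \varnothing$, so $\gtm^* \in \partial M$ and $\widehat{f}(\gtm^*) = 0$. Applying (i) to the single function $f$ furnishes $\gtm_\alpha^* \in \widetilde{\partial}M$ with $\widehat{f}(\gtm_\alpha^*) = 0$, i.e., $f \in \gtm_\alpha^*$, a contradiction. For (iii), recall that $\{{\mathcal D}_{\betaa M}(h) : h \in {\mathcal S}^*(M)\}$ is a basis of the Zariski topology on $\betaa M$; given $\gtm^* \in \partial M$ and a basic neighborhood ${\mathcal D}_{\betaa M}(h) \ni \gtm^*$, set $c := \widehat{h}(\gtm^*) \neq 0$ and apply (i) with $r = 1$ to obtain $\gtm_\alpha^* \in \widetilde{\partial}M$ with $\widehat{h}(\gtm_\alpha^*) = c \neq 0$, so $h \notin \gtm_\alpha^*$ and $\gtm_\alpha^* \in {\mathcal D}_{\betaa M}(h) \cap \widetilde{\partial}M$.
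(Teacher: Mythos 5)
Your proof is correct, and (ii) and (iii) follow from (i) exactly as in the paper; the genuine divergence is in how (i) is established. Both arguments come down to applying the curve selection lemma to the closure of a graph over the compact set $\cl(M)$ and reading off the limit values along the resulting semialgebraic path, but they locate the base point $p\in\cl(M)\setminus M$ at which to select the curve in different ways. The paper invokes the canonical continuous surjection $\rho:\betaa M\to\cl(M)$ of \cite[4.3 and 4.6]{fg5}, which is the identity on $M$ and maps $\partial M$ onto $\cl(M)\setminus M$; the point $q:=(\rho(\gtm^*),\widehat{f}(\gtm^*))$ is then immediately seen to lie in $\cl(\Gamma)\setminus\Gamma$ for the graph $\Gamma$ of the whole tuple $(f_1,\ldots,f_r)$, and curve selection at $q$ finishes the proof in one stroke. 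You instead collapse the tuple to the single nonnegative function $g=\sum_i(f_i-y_i)^2$ with $\widehat{g}(\gtm^*)=0$ and prove by contradiction that the fibre of $\cl(\gr(g))$ over the value $0$ must meet $\cl(M)\setminus M$: your separation argument (a compact $Z(g)\subset M$ admits an open semialgebraic $V$ with $\cl(V)\subset M$, the lower bound $g\geq\delta$ off $V$, and the identification $\cl_{\betaa M}(\cl(V))\cong\betaa(\cl(V))=\cl(V)\subset M$ via \ref{closedbeta1}(i)) correctly forces $\gtm^*$ to be a fixed maximal ideal otherwise, and the degenerate case $Z(g)=\varnothing$ is absorbed by the same words. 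What your route buys is independence from the structural results on $\rho$ in \cite{fg5}, at the cost of the contradiction detour and the quadratic reduction to one function; what the paper's route buys is brevity and the simultaneous treatment of all $r$ coordinates. Both are valid proofs of the lemma.
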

\begin{proof} 
(i) Assume $M$ is bounded, so $\cl(M)$ is a semialgebraic compactification of $M$. Thus, there exists by \cite[4.6]{fg5} a surjective continuous map $\rho:\betaa M\to\cl(M)$ that is the identity on $M$. Fix $\gtm^*\in\partial M$ and observe that by \cite[4.3(i)]{fg5} $p:=\rho(\gtm^*)\in\cl(M)\setminus M$. Consider the proper map $\Psi:=(\rho,\widehat{f}):\betaa M\to\R^{m+r}$ where we abbreviate $f:=(f_1,\ldots,f_r)$ and $\widehat{f}:=(\widehat{f_1},\ldots,\widehat{f_r})$ and denote $a:=\widehat{f}(\gtm^*)$. Clearly, $\Psi(M)$ is the graph $\Gamma$ of $f$ and since $\Psi$ is proper, 
$$
\im\Psi=\Psi(\cl_{\betaa M}(M))=\cl_{\R^{m+r}}(\Psi(M))=\cl_{\R^{m+r}}(\Gamma).
$$
Again by \cite[4.3(i)]{fg5} $q:=\Psi(\gtm^*)=(\rho(\gtm^*),\widehat{f}(\gtm^*))=(p,a)\in\cl_{\R^{m+r}}(\Gamma)\setminus\Gamma\subset\R^m\times\R^r$.

By the curve selection lemma there exist semialgebraic paths $\alpha:[0,1]\to\R^m$ and $\mu:[0,1]\to\R^r$ such that $\alpha((0,1])\subset M$, $\mu|_{(0,\,1]}=(f\circ\alpha)|_{(0,\,1]}$ and $(\alpha(0),\mu(0))=q$. Consequently,
$$
\widehat{f}(\gtm^*)=a=\mu(0)=\lim_{t\to0^+}\mu(t)=\lim_{t\to0^+}(f\circ\alpha)(t)=\widehat{f}(\gtm_\alpha^*)
$$
where $\gtm_\alpha^*\in \widetilde{\partial}M$ because $\lim_{t\to0^+}\alpha(t)=p\not\in M$. 

(ii) Observe that $f\in{\mathcal S}^*(M)$ is a unit if and only if $0\notin\widehat{f}(\betaa M)=f(M)\cup\widehat{f}(\partial M)=f(M)\cup\widehat{f}(\widetilde{\partial}M)$ (see assertion (i)), which proves the statement.

(iii) We have to check that for every $f\in {\mathcal S}^*(M)$ such that $\di_{\betaa M}(f)\not \subset M$ the intersection $\di_{\betaa M}(f)\cap\widetilde{\partial}M$ is non-empty. Otherwise, $\widetilde{\partial}M\subset\ceros_{\betaa M}(f)$ and by part (i) we obtain $\{0\}=\widehat{f}(\widetilde{\partial}M)=\widehat{f}(\partial M)$ or equivalently $\di_{\betaa M}(f)\subset M$, which is a contradiction.
\end{proof}

\begin{remark}
As a consequence of the previous result one can show the following well-known fact \cite[2.1, 2.2]{b}: \em A surjective semialgebraic map $g:N\to M$ is proper if and only if $\betaa\,g(\partial N)=\partial M$\em.

The `if' part is clear. For the converse assume $M$ bounded. Denote $\widehat{g}:=(\widehat{g_1},\ldots,\widehat{g_m}):\betaa N\to\R^m$ where $\widehat{g_i}$ is the (unique continuous) extension of the component $g_i$ of $g$ to $\betaa N$. Suppose there exists a point $\gtn^*\in\partial N$ such that $p:=\betaa\,g(\gtn^*)\in M$. By Lemma \ref{deltam}(i) there exists $\gtn^*_\alpha\in\widetilde{\partial}N$ such that $\widehat{g}(\gtn^*_{\alpha})=p$, so $\betaa\,g(\gtn^*_\alpha)=\lim_{t\to0^+}(g\circ\alpha)(t)=\widehat{g}(\gtn^*_{\alpha})=p\in M$. As $g$ is proper, $\alpha(0)=\lim_{t\to0^+}\alpha(t)$ belongs to $N$, which contradicts the fact that $\gtn^*_\alpha\in\widetilde{\partial}N$. Consequently, $\betaa\,g(\partial N)\subset\partial M$. The converse inclusion follows because $\betaa\,g$ is surjective.
\end{remark}

\subsection{Differences between the sets $\widetilde{\partial}M$, $\widehat{\partial}M$ and $\partial M$}\label{diferencias}

We prove next that the non-empty differences $\partial M\setminus\widehat{\partial}M$ and $\widehat{\partial}M\setminus\widetilde{\partial}M$ are respectively dense in $\partial M$ and $\widehat{\partial}M$ under mild conditions.

\begin{thm}\label{diferencias-s}
Assume that $M=M_{\geq2}$ is not compact. Then $\partial M\setminus\widehat{\partial}M$ is dense in $\partial M$ and $\widehat{\partial}M\setminus\widetilde{\partial}M$ is dense in $\widehat{\partial}M$.
\end{thm}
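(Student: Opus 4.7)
The plan is to reduce both density statements to basic Zariski open sets of $\betaa M$ and to produce explicit witnesses inside each, exploiting the fact that $M=M_{\geq 2}$ supplies a $2$-dimensional piece of $M$ accumulating on every point of $\cl(M)\setminus M$. Fix throughout a basic open $U:={\mathcal D}_{\betaa M}(f)$ of $\betaa M$ with $U\cap\partial M\neq\varnothing$. By Lemma \ref{deltam}(iii) pick a semialgebraic path $\alpha\in M_{F_0}$ with $\gtm^*_\alpha\in U\cap\widetilde{\partial}M$, and set $p:=\alpha(0)\in\cl(M)\setminus M$. Because $M=M_{\geq2}$, one may choose an open semialgebraic neighborhood $V\subset D(f)\cap M$ of the arc $\alpha((0,\veps))$ of everywhere local dimension $2$, with $p\in\cl(V)\setminus V$.

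For the density of $\widehat{\partial}M\setminus\widetilde{\partial}M$ in $\widehat{\partial}M$, the idea is to perturb $\alpha$ inside $V$ by a transcendental Puiseux series along a transverse direction. Pick a smooth semialgebraic unit vector field $\nu$ on $V$ everywhere transverse to the image of $\alpha$ (possible because $\dim V=2$), and fix a transcendental Puiseux series $\xi\in\R[[\t^*]]$ of positive order, e.g.\ the Liouville-type series $\xi(\t):=\sum_{k\ge1}2^{-k!}\t^k$. Define
\[
\beta(\t):=\alpha(\t)+\t^N\xi(\t)\,\nu(\alpha(\t))\in\R[[\t^*]]^m.
\]
The Transfer Principle applied to a semialgebraic tubular neighborhood of $\alpha$ inside $V$ yields $\beta\in V_{F_1}\subset M_{F_1}$ for $N$ large. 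Increasing $N$ further so that $N+\omega(\xi)$ exceeds the \L ojasiewicz exponent controlling the variation of the bounded semialgebraic function $f$ over the compact set $\cl(V)\cap\cl(B(p,\veps))$, the expansions of $\psi_\beta(f)$ and $\psi_\alpha(f)$ agree modulo $(\t)\R[[\t^*]]$, so $\psi_\beta(f)(0)=\psi_\alpha(f)(0)\neq 0$ and hence $\gtm^*_\beta\in U$. The transcendence of $\xi$ combined with the transversality of $\nu$ precludes $\beta$ from being an algebraic Puiseux tuple, so $\gtm^*_\beta\in U\cap(\widehat{\partial}M\setminus\widetilde{\partial}M)$.

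For the density of $\partial M\setminus\widehat{\partial}M$ in $\partial M$, the plan is to extract from $V$ a maximal ideal whose associated $\R$-character on $\mathcal S^*(M)$ fails to match that of any single formal path. Using $\dim V=2$ and the curve selection lemma, pick an uncountable family $\{\lambda_s\}_{s\in I}$ of semialgebraic half-branches in $V$ through $p$ with pairwise distinct tangent directions at $p$ (e.g.\ radii parametrized by $s\in I:=[0,2\pi)$). For a countable dense $\{s_n\}_{n\ge1}\subset I$ and any non-principal ultrafilter $\mathcal U$ on $\N$, the net $\{\gtm^*_{\lambda_{s_n}}\}_{n\ge1}\subset U\cap\widetilde{\partial}M$ has, by compactness of $\betaa M$, a unique $\mathcal U$-limit $\gtm^*\in\cl_{\betaa M}(\{\gtm^*_{\lambda_{s_n}}\})\cap U\cap\partial M$, characterized by $g\in\gtm^*\Leftrightarrow\lim_{\mathcal U}\psi_{\lambda_{s_n}}(g)(0)=0$. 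If $\gtm^*=\gtm^*_\gamma$ for some formal path $\gamma\in M_{F_1}$ with $\gamma(0)=p$, then the Puiseux expansion of $\gamma$ has a single well-defined leading tangent direction $\tau\in\mathbb S^{m-1}$; evaluating on a bounded semialgebraic angular function (e.g.\ the normalized coordinate of $(x-p)/\|x-p\|$), one can ensure that $\psi_\gamma(g)(0)$ matches $\psi_{\lambda_s}(g)(0)$ for at most the unique $s$ whose radial direction is $\tau$—giving a contradiction as soon as $\mathcal U$ is chosen not to concentrate at any single $s$.

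The main obstacle is the last step: making rigorous that a carefully chosen ultrafilter limit $\gtm^*$ cannot match the tangent data of any Puiseux series $\gamma$. The technical ingredients are an $F_1$-version of the curve selection lemma (to extract a tangent direction from $\gamma$), the fact from \ref{closedbeta1}(i) that closures in $\betaa M$ behave well for closed semialgebraic pieces, and a careful choice of $\mathcal U$ (e.g.\ one coming from an irrational rotation of $I$) so that $\lim_{\mathcal U}\psi_{\lambda_{s_n}}(g)(0)$ cannot equal $\psi_{\lambda_{s_*}}(g)(0)$ for any single $s_*$ on an appropriate semialgebraic angular $g$. A secondary route, available once the first-countability of $\widehat{\partial}M$ in $\betaa M$ is established (Theorem \ref{gpn3}), is to locate a non-first-countable point of $\partial M$ inside $U$—such a point automatically falls in $\partial M\setminus\widehat{\partial}M$ and yields the required witness.
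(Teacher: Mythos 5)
Your overall localization strategy (reduce to a basic open set $\di_{\betaa M}(f)$, pick a semialgebraic path there by Lemma \ref{deltam}(iii), and work in a two--dimensional piece of $M$ accumulating at $p$) matches the spirit of the paper, which localizes to a closed punctured triangle $T_1\subset D(f)$ and then transfers witnesses back via the compatibility $\widehat{\partial}T_1=\widehat{\partial}M\cap\partial T_1$, $\widetilde{\partial}T_1=\widetilde{\partial}M\cap\partial T_1$ (Lemma \ref{delrel}). But both of your existence arguments have genuine gaps. For $\widehat{\partial}M\setminus\widetilde{\partial}M$: from the fact that $\beta=\alpha+\t^N\xi(\t)\nu(\alpha(\t))$ is not an algebraic Puiseux tuple you conclude $\gtm^*_\beta\notin\widetilde{\partial}M$. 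This is a non sequitur: $\widetilde{\partial}M$ is a set of \emph{maximal ideals}, and the map $\mu\mapsto\gtm^*_\mu$ is very far from injective (reparameterizations, and more drastically Example \ref{gpn}(i), where a transcendental curve induces the same maximal ideal as $(t,0)$). You must rule out $\gtm^*_\beta=\gtm^*_\mu$ for \emph{every} semialgebraic $\mu\in M_{F_0}$; the paper does this with the explicit separating functions $f_k(x,y)=(y-p_k(x))^2/((y-p_k(x))^2+x^{2k})$, exploiting that a transcendental series differs from any algebraic one at some \emph{finite} order, so some $f_k$ lies in $\gtm^*_\alpha\setminus\gtm^*_\mu$. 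Your construction is salvageable by such an argument, but you have not given it. (Also, your claim that $\psi_\beta(f)(0)=\psi_\alpha(f)(0)$ via a \L ojasiewicz exponent ``over the compact set $\cl(V)\cap\cl(B(p,\veps))$'' is shaky: $f$ need not extend continuously to $\cl(V)\ni p$, so no uniform H\"older bound is available there; what is really needed is the tube estimate of Theorem \ref{gpn3}.)

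For $\partial M\setminus\widehat{\partial}M$ the situation is worse: the existence of a point of $\partial M$ not associated with \emph{any} formal path is the entire difficulty, and your ultrafilter argument leaves it open (you say so yourself). The angular test function only pins down a leading tangent direction, and nothing prevents a formal path $\gamma$ with tangent direction $\lim_{\mathcal U}s_n$ from inducing the limit ideal; to exclude all $\gamma$ you would need infinitely many higher--order separating functions, which is not sketched. The paper's route is entirely different and algebraic: by Corollary \ref{ppp} a formal--path ideal $\gtm^*_\alpha$ admits no prime strictly between $\gtm_\alpha\cap{\mathcal S}^*(M)$ and $\gtm^*_\alpha$, whereas when $M_{\geq2}$ is not compact \cite[7.1]{fe1} produces a maximal ideal $\gtm^*$ of height $\geq2$ lying over a maximal ideal $\gtm$ of ${\mathcal S}(M)$ of height $0$; such a $\gtm^*$ cannot be any $\gtm^*_\alpha$. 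Your ``secondary route'' via non--first--countable points is circular in flavor (Theorem \ref{gpn3} gives first countability of $\widehat{\partial}M$, but you would still need to \emph{produce} a non--first--countable point of $\partial M$ inside $U$, which again requires the missing existence result). As it stands, neither half of the theorem is proved.
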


We begin with some preliminary results.

\begin{lem}\label{pppoints0}
Assume that $M$ is bounded. Then $\partial M\setminus\widehat{\partial}M\neq\varnothing$ if and only if $M_{\geq2}$ is not compact. In addition, if $M_{\geq2}$ is compact, then $\widetilde{\partial} M=\partial M$ is a finite set.
\end{lem}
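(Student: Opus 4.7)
The plan is to combine both directions of the biconditional with the ``in addition'' clause: since $\widetilde{\partial}M\subset\widehat{\partial}M\subset\partial M$, establishing $\partial M=\widetilde{\partial}M$ finite when $M_{\geq 2}$ is compact immediately yields $\partial M\setminus\widehat{\partial}M=\varnothing$ in that case, which is the contrapositive of the non-trivial direction I would then handle separately.

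\emph{Case $M_{\geq 2}$ compact.} By Remark~\ref{curvas}(iii), $M$ is locally compact and $\cl(M)\setminus M$ is finite. Fix $p\in\cl(M)\setminus M$. Since $M_{\geq 2}$ is compact (hence closed in $\R^m$) and $p\notin M_{\geq 2}$, choose a compact semialgebraic ball-neighbourhood $W$ of $p$ in $\R^m$ with $W\cap M_{\geq 2}=\varnothing$, so that $K:=W\cap M\subset\ol{M}_{\leq 1}$ is closed in $M$ and of dimension $\leq 1$. Invoking the local star-structure of $1$-dimensional semialgebraic sets at a boundary point, $K$ is (after shrinking $W$ if necessary) a finite disjoint union of semialgebraic injective arcs $\gamma_i:(0,1]\to K$ with $\lim_{t\to 0^+}\gamma_i(t)=p$. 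Each $\gamma_i((0,1])$ is closed in $M$ and semialgebraically homeomorphic to $(0,1]$; by~\ref{closedbeta1}(i), $\cl_{\betaa M}(\gamma_i((0,1]))\cong\betaa(0,1]$. Since every semialgebraic path in $(0,1]$ converging to $0$ defines the same maximal ideal (the one collecting functions with null one-sided limit at $0$), Lemma~\ref{deltam}(iii) yields $\partial(0,1]=\widetilde{\partial}(0,1]=\{\gtm_0^*\}$, i.e.\ $\betaa(0,1]=[0,1]$. Hence the closure of each arc in $\betaa M$ adds exactly $\gtm_{\gamma_i}^*\in\widetilde{\partial} M$ to the remainder. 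Choosing $W$ so that $p\notin\cl_{\R^m}(M\setminus W)$, the canonical surjection $\rho:\betaa M\to\cl(M)$ satisfies $\rho^{-1}(p)\cap\cl_{\betaa M}(M\setminus W)=\varnothing$, so $\rho^{-1}(p)\cap\partial M\subset\{\gtm_{\gamma_1}^*,\ldots,\gtm_{\gamma_k}^*\}\subset\widetilde{\partial} M$. Ranging over the finite set $\cl(M)\setminus M$ gives the ``in addition'' clause, and hence the easy direction of the biconditional.

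\emph{Case $M_{\geq 2}$ not compact.} Since $M_{\geq 2}$ is bounded and closed in $M$, the set $\cl(M_{\geq 2})\setminus M_{\geq 2}\subset\cl(M)\setminus M$ is non-empty; pick $p$ there, so $\cl(M)$ has local dimension $\geq 2$ at $p$. My intended approach is a cardinality argument: $|\widehat{\partial} M|\leq|M_{F_1}|\leq|F_1|^m=2^{\aleph_0}$ since $|F_1|=|\R((\t^*))|=2^{\aleph_0}$, so it suffices to show $|\rho^{-1}(p)\cap\partial M|>2^{\aleph_0}$. To this end I would pick a closed $2$-dimensional semialgebraic $H\subset M$ with $p\in\cl(H)\setminus H$, use $\cl_{\betaa M}(H)\cong\betaa H$ from~\ref{closedbeta1}(i) to reduce to the fibre of $\rho$ over $p$ inside $\betaa H$, and exhibit an injection of a set of cardinality $>2^{\aleph_0}$ into this fibre via ultrafilter-type limits of sequences in $H$ converging to $p$, pairwise separated by the bounded semialgebraic functions on $H$.

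\emph{Main obstacle.} The delicate step is the strict lower bound $|\rho^{-1}(p)\cap\partial M|>2^{\aleph_0}$: the classical Stone--\v Cech count yielding $2^{2^{\aleph_0}}$ relies on the full algebra $\ell^\infty$, whereas only the image of ${\mathcal S}^*(M)$ in $\ell^\infty$ is available, of cardinality $2^{\aleph_0}$. A cleaner alternative avoiding cardinality altogether is to exploit Corollary~\ref{ppp}: every $\gtm^*\in\widehat{\partial} M$ admits no prime of ${\mathcal S}^*(M)$ strictly between $\gtm\cap{\mathcal S}^*(M)$ and $\gtm^*$. Hence it would be enough to construct a free $\gtm^*$ over $p$ admitting such an intermediate prime, produced from a longer chain of prime $z$-ideals in ${\mathcal S}(M)$ at $p$ (via Remark~\ref{etac} and the local $2$-dimensional structure) and transferred to ${\mathcal S}^*(M)$ by the lifting procedure of~\cite[1.A.2]{fg2} recalled in the proof of Corollary~\ref{ppp}.
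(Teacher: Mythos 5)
Your treatment of the compact case is correct and is genuinely different from the paper's: the paper simply quotes \cite[5.17]{fg5} for the finiteness of $\partial M$ and then concludes $\widetilde{\partial}M=\partial M$ from the density statement of Lemma \ref{deltam}(iii), whereas you re-derive finiteness by hand from the local conic structure at the finitely many points of $\rho_0(M)=\cl(M)\setminus M$ and the computation $\betaa(0,1]=[0,1]$. That is a legitimate, more self-contained route (it costs you a page of local analysis but avoids the external citation), and your use of $\rho$ to confine the fibre over $p$ to $\cl_{\betaa M}(W\cap M)$ is sound.

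The non-compact direction, however, has a genuine gap, and it sits exactly where you flag your ``main obstacle''. Your primary plan needs $|\rho^{-1}(p)\cap\partial M|>2^{\aleph_0}$, which you do not prove and which does not follow from anything recalled in the paper; as you note, the classical $2^{2^{\aleph_0}}$ count is not available because ${\mathcal S}^*(M)$ only has cardinality $2^{\aleph_0}$, so one would have to build an independent family of semialgebraic sets accumulating at $p$ from scratch. Your ``cleaner alternative'' is indeed the right strategy --- and it is the paper's: by Corollary \ref{ppp} it suffices to produce a \emph{free} maximal ideal $\gtm^*$ admitting a prime strictly between $\gtm\cap{\mathcal S}^*(M)$ and $\gtm^*$. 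But the existence of such an ideal is precisely the hard content here, and your sketch does not deliver it: a ``longer chain of prime $z$-ideals in ${\mathcal S}(M)$'' lives \emph{below} $\gtm$, hence below $\gtm\cap{\mathcal S}^*(M)$ after contraction (recall ${\mathcal S}(M)$ is a localization of ${\mathcal S}^*(M)$), so it can never supply the intermediate prime between $\gtm\cap{\mathcal S}^*(M)$ and $\gtm^*$; moreover the lifting of \cite[1.A.2]{fg2} goes from chains in ${\mathcal S}^*(M)$ to chains in ${\mathcal S}(X)$, not the other way. The paper closes this gap by invoking \cite[7.1]{fe1}: at a point $p\in\cl(M_{\geq2})\setminus(\cl(\rho_1(M_{\geq2}))\cup M_{\geq2})$ (which exists precisely because $M_{\geq2}$ is not compact) there is a maximal ideal $\gtm^*$ with $\hgt(\gtm^*)\geq2$ while $\hgt(\gtm)=0$; together with \cite[5.2(i)]{fe1} this forces an intermediate prime, and Corollary \ref{ppp} then excludes $\gtm^*$ from $\widehat{\partial}M$. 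Without that (or an equivalent construction) your proof of this direction is incomplete.
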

\begin{proof}
Suppose first that $M_{\geq2}$ is compact. The finiteness of $\partial M$ follows from \cite[5.17]{fg5}, so by Lemma \ref{deltam}(iii) $\widetilde{\partial}M=\partial M$. Conversely, suppose that $M_{\geq2}$ is not compact. By \cite[7.1(i)]{fe1} there exists a point $p\in\cl(M_{\geq2})\setminus(\cl(\rho_1(M_{\geq2}))\cup M_{\geq2})$. Notice that $\rho_1(M)=\rho_1(M_{\geq2})$. In addition, $M_{\geq2}$ is closed in $M$, so $p\in\cl(M)\setminus(\cl(\rho_1(M))\cup M)$ and $\dim_p(\cl(M))\geq2$. By \cite[7.1(ii)]{fe1} there exists a maximal ideal $\gtm^*$ of ${\mathcal S}^*(M)$ of height $\geq 2$ such that $\hgt(\gtm)=0$. This implies by Corollary \ref{ppp} that $\gtm^*\in\partial M\setminus\widehat{\partial}M$, as required.
\end{proof}

\begin{lem}[Behavior of the operators $\widetilde{\partial}$ and $\widehat{\partial}$]\label{delrel}
Assume that $M$ is bounded and let $Y\subset M$ be a closed semialgebraic subset of $M$. As the semialgebraic sets $Y$, $M_{\geq2}$ and $\ol{M}_{\leq1}$ are closed in $M$, we identify $\cl_{\betaa M}(Y)\equiv\betaa Y$, $\cl_{\betaa M}(M_{\geq2})\equiv\betaa M_{\geq2}$ and $\cl_{\betaa M}(\ol{M}_{\leq1})\equiv\betaa\ol{M}_{\leq1}$. Then
\begin{itemize}
\item[(i)] $\widetilde{\partial} Y=\widetilde{\partial}M\cap\partial Y$ and $\widehat{\partial} Y=\widehat{\partial}M\cap\partial Y$.
\item[(ii)] $\partial M=\partial M_{\geq2}\sqcup\partial\ol{M}_{\leq1}$. 
\item[(iii)] $\widetilde{\partial} M=\widetilde{\partial}M_{\geq2}\sqcup\widetilde{\partial}\,\ol{M}_{\leq1}$ and $\widehat{\partial} M=\widehat{\partial}M_{\geq2}\sqcup\widehat{\partial}\,\ol{M}_{\leq1}$.
\end{itemize}
\end{lem}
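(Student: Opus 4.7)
The plan is to prove (i), (ii), (iii) in order; the last will be a formal consequence of the first two.

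For the easy direction of (i), I observe that a formal (resp.\ semialgebraic) path $\alpha$ in $Y$ with $\alpha(0)\in\cl(Y)\setminus Y\subset\cl(M)\setminus M$ is also a formal (resp.\ semialgebraic) path in $M$, and the descriptions in \ref{mifp}--\ref{misp} show that $\betaa\,{\tt j}$ carries $\gtm^{Y,*}_\alpha$ to $\gtm^{M,*}_\alpha$, yielding $\widetilde{\partial}Y\subset\widetilde{\partial}M\cap\partial Y$ and $\widehat{\partial}Y\subset\widehat{\partial}M\cap\partial Y$. For the reverse, take $\gtm^*_\alpha\in\widetilde{\partial}M\cap\partial Y$ with $\alpha\in M_{F_0}$, $p:=\alpha(0)\in\cl(M)\setminus M$. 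Since $t\mapsto\alpha(t)$ is semialgebraic on some $(0,\veps]$, the subset $\{t\in(0,\veps]:\alpha(t)\in Y\}$ is semialgebraic, so after shrinking $\veps$ we have either $\alpha((0,\veps])\subset Y$ (in which case $\alpha\in Y_{F_0}$ gives $\gtm^*_\alpha\in\widetilde{\partial}Y$ at once) or $\alpha((0,\veps])\cap Y=\varnothing$, a case I will rule out.

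The key ingredient in that second case is a bounded semialgebraic function on $M$ that vanishes on $Y$ but takes the value $1$ along the tail of $\alpha$. Since $p\notin M$ makes $\|x-p\|$ strictly positive on $M$, the function
$$
f_N(x):=\min\Bigl(\frac{\dist(x,Y)}{\|x-p\|^N},\,1\Bigr)
$$
lies in ${\mathcal S}^*(M)$ for each $N\in\N$ and vanishes on $Y$, so $f_N\in\gtm^*_\alpha$ by $\gtm^*_\alpha\in\cl_{\betaa M}(Y)$. Expanding $\dist(\alpha(t),Y)=ct^a+\cdots$ and $\|\alpha(t)-p\|=c't^{a'}+\cdots$ as Puiseux series, with $c,c'>0$ and $a'>0$ because $\alpha(t)\to p$, the choice $N>a/a'$ forces $f_N(\alpha(t))=1$ for small $t>0$, so $\widehat{f_N}(\gtm^*_\alpha)=1\neq 0$, a contradiction. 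The $\widehat{\partial}$ version is the same argument carried out inside $F_1$: if $\alpha\in M_{F_1}\setminus Y_{F_1}$ then, using that $Y$ is closed in $M$ and $M$ may be assumed bounded, $\dist(\alpha,Y)_{F_1}>0$ in $F_1$ with some $\t$-order $a\geq 0$, and $\|\alpha-p\|_{F_1}$ has order $a'\geq 1$ because each coordinate $\alpha_i-p_i$ belongs to $(\t)\subset\R[[\t]]$; Tarski transfer of the semialgebraic formula defining $f_N$ then collapses $f_{N,F_1}(\alpha)$ to $1$ for $N>a/a'$, so $\psi_\alpha(f_N)(0)=1$, contradicting $f_N\in\gtm^{M,*}_\alpha$. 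If instead $\alpha\in Y_{F_1}$, then $\|\alpha-\alpha(0)\|_{F_1}$ is infinitesimal and $\alpha$ therefore lies in every $\R$-ball around $\alpha(0)$; transfer gives each such ball meeting $Y$, whence $\alpha(0)\in\cl(Y)\setminus Y$ and $\gtm^{Y,*}_\alpha\in\widehat{\partial}Y$ maps to $\gtm^{M,*}_\alpha$.

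For (ii), $M=M_{\geq2}\cup\ol{M}_{\leq1}$ with both pieces closed in $M$, and by Remark \ref{curvas}(ii) their intersection $F$ is finite. Then \ref{closedbeta1}(ii) yields $\cl_{\betaa M}(M_{\geq2})\cap\cl_{\betaa M}(\ol{M}_{\leq1})=\cl_{\betaa M}(F)=F\subset M$, so removing $M$ from the covering $\betaa M=\cl_{\betaa M}(M_{\geq2})\cup\cl_{\betaa M}(\ol{M}_{\leq1})$ gives the disjoint decomposition $\partial M=\partial M_{\geq2}\sqcup\partial\ol{M}_{\leq1}$. Finally, (iii) follows by intersecting this disjoint union with $\widetilde{\partial}M$ (resp.\ $\widehat{\partial}M$) and applying (i) with $Y=M_{\geq2}$ and $Y=\ol{M}_{\leq1}$.

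The main obstacle is the reverse inclusion in (i), and in particular its formal-path version: one has to guess the right auxiliary function $f_N$, and verify carefully inside the ordered field $F_1$, via Tarski transfer of the semialgebraic expression involving $\min$ and division, that the choice $N>a/a'$ collapses the evaluation to the constant $1$, which is what forces the contradiction.
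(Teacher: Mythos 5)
Your proof is correct, and for part (i) -- the substantive part -- it is essentially the paper's own argument: the same dichotomy ($\alpha\in Y_{F_1}$ or $\alpha\in(M\setminus Y)_{F_1}$), resolved by the same trick of building a bounded semialgebraic function that vanishes on $Y$ (hence lies in $\gtm_\alpha^*$ because $\gtm_\alpha^*\in\cl_{\betaa M}(Y)$) yet evaluates to $1$ along $\alpha$ after dividing by a power of $\|x-\alpha(0)\|$ calibrated to the orders of the relevant Puiseux series; your $\min\bigl(\dist(x,Y)/\|x-p\|^N,1\bigr)$ plays exactly the role of the paper's $g^2/(g^2+\|x-\alpha(0)\|^{2(p/q)+1})$. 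The only genuine divergence is in (ii): the paper argues via the connected components of $\partial M$ (the singleton components lie in $\partial\ol{M}_{\leq1}$), whereas you use $\cl_{\betaa M}(M_{\geq2})\cap\cl_{\betaa M}(\ol{M}_{\leq1})=\cl_{\betaa M}(M_{\geq2}\cap\ol{M}_{\leq1})$ together with the finiteness of $M_{\geq2}\cap\ol{M}_{\leq1}$; your route is more elementary and arguably more transparent, at the cost of invoking the closure-of-intersection formula for closed semialgebraic subsets. Your separate Euclidean-limit treatment of the $\widetilde{\partial}$ case is harmless but redundant, since (as the paper notes) the semialgebraicity of a formal path is independent of the ambient semialgebraic set, so the $\widetilde{\partial}$ equality follows formally from the $\widehat{\partial}$ one.
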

\begin{proof}
(i) Let us check first $\widehat{\partial} Y=\widehat{\partial}M\cap\partial Y$. For the non-obvious inclusion let $\gtm^*_\alpha\in\widehat{\partial}M\cap\partial Y$. Suppose by contradiction $\gtm^*_\alpha\not\in\widehat{\partial} Y$, that is, $\alpha\not\in Y_{F_1}$. Thus, $\alpha\in(M\setminus Y)_{F_1}$ and there exists $g\in{\mathcal S}^*(M)$ such that $\alpha\in(D(g))_{F_1}\subset(M\setminus Y)_{F_1}$. In particular, $g|_{Y}\equiv 0$ and $\psi_{\alpha}(g)\neq0$. Write $\psi_{\alpha}(g):=a\t^p+\cdots$ for some $a\neq 0$ and a non-negative rational number $p$ and $\|\alpha(\t)-\alpha(0)\|:=b\t^q+\cdots$ for some $b\neq 0$ and a positive $q\in\Q$. Recall that $\alpha(0)\not\in M$ because $\gtm_\alpha^*\in\partial Y$ and consider the bounded semialgebraic function
$$
f:M\to\R,\ x\mapsto\frac{g^2(x)}{g^2(x)+\|x-\alpha(0)\|^{2(p/q)+1}},
$$
which vanishes identically on $Y$ and satisfies $\psi_{\alpha}(f)(0)=1$. Thus, $f\in\ker\phi\setminus\gtm^*_\alpha$ where $\phi:{\mathcal S}^*(M)\to{\mathcal S}^*(Y),\,h\mapsto h|_Y$. This contradicts the fact that $\gtm^*_\alpha\in\partial Y\equiv\cl_{\betaa M}(Y)\setminus Y$ because $\cl_{\betaa M}(Y)$ is  by \cite[6.3]{fg3} the collection of those maximal ideals of ${\mathcal S}^*(M)$ containing $\ker\phi$. 

The first equality in (i) follows from the equality already proved above because the semialgebraic character of a formal path does not depend on the semialgebraic set where it is considered. Statement (ii) follows by considering the connected components of $\partial M$ and noticing that the union of the ones that are singletons belongs to $\partial\ol{M}_{\leq1}$. Statement (iii) follows from (i) and (ii).
\end{proof}

\begin{remark}
The assumption $M=M_{\geq2}$ in Theorem \ref{diferencias-s} is not restrictive. As $\dim(\ol{M}_{\leq1})\leq 1$, we deduce from Lemma \ref{pppoints0} that $\widetilde{\partial}\,\ol{M}_{\leq1}=\widehat{\partial}\,\ol{M}_{\leq1}=\partial\ol{M}_{\leq1}$. By Lemma \ref{delrel} we obtain
$$
\partial M\setminus\widehat{\partial}M=\partial M_{\geq2}\setminus\widehat{\partial}M_{\geq2}\quad\text{and}\quad\widehat{\partial}M\setminus \widetilde{\partial}M=\widehat{\partial}M_{\geq2}\setminus \widetilde{\partial}M_{\geq2},
$$
so Theorem \ref{diferencias-s} is conclusive.
\end{remark}

\begin{proof}[Proof of Theorem \em\ref{diferencias-s}]
We prove first the following:\setcounter{paragraph}{0} 

\paragraph{}\label{redtrianT}
\em The sets $\partial T\setminus\widehat{\partial}T$ and $\widehat{\partial}T\setminus \widetilde{\partial}T$ are not empty for the punctured triangle 
$$
T:=\{(x,y)\in\R^2:\,0\leq y\leq x\leq1\}\setminus\{(0,0)\}.
$$\em

By Lemma \ref{pppoints0} we obtain $\partial T\setminus\widehat{\partial}T\neq\varnothing$. In order to prove $\widehat{\partial}T\setminus\widetilde{\partial}T\neq\varnothing$ choose the formal series $\alpha_1(\t)=\t$ and $\alpha_2(\t)=\sum_{n\geq2}n!\t^n\in\R[[\t]]\setminus\R[[\t]]_{\rm alg}$ and the formal path $\alpha:=(\alpha_1,\alpha_2)\in\R[[\t]]^2$. Note that $\alpha\in T_{F_1}$ and let us show $\gtm_\alpha^*\in\widehat{\partial}T\setminus\widetilde{\partial}T$. 

For each $k\geq 2$ consider the semialgebraic function $f_k\in{\mathcal S}^*(T)$ given by the formula
$$
f_k(x,y):=\frac{(y-p_k(x))^2}{(y-p_k(x))^2+x^{2k}}\quad\text{where $p_k(x):=\sum_{n=2}^{k}n!x^n$}.
$$
We have $\psi_{\alpha}(f_k)(0)=0$, so $f_k\in\gtm^*_\alpha$ for all $k\geq2$ (see \ref{mifp}). Suppose now that $\gtm_\alpha=\gtm_\mu$ for some $\mu\in\R[[\t]]_{\rm alg}^2$ with $\mu\in T_{F_1}$. To obtain a contradiction, it is enough to check that $f_k\not\in\gtm_\mu$ for some $k\geq2$. Without loss of generality and after reparameterizing $\mu$, we may assume $\mu(\t)=(\t^j,\mu_2(\t))$ for some integer $j\geq1$ and some analytic series $\mu_2(\t)\in\R[[\t]]_{\rm alg}$ whose order is $\geq j$. As the series $\alpha_2(\t^j)$ is not analytic, $\alpha_2(\t^j)-\mu_2(\t)\neq 0$ and its order is $p\geq 1$. Thus, $\psi_\mu(f_k)(0)\neq 0$ for $k=p+1$, so $f_k\not\in\gtm_\mu$, as claimed.

\paragraph{} We show the statement under the assumption that $M$ is bounded. 

Let $f\in {\mathcal S}^*(M)$ be such that $\di_{\betaa M}(f)$ meets $\partial M$ and $\widehat{f}:\betaa M\to\R$ be the (unique) continuous extension of $f$ to $\betaa M$. As $\widetilde{\partial}M$ is dense in $\partial M$, there exists $\gtm_\alpha^*\in\widetilde{\partial}M\cap\di_{\betaa M}(f)$. Write $c:=\widehat{f}(\gtm^*_\alpha)\neq0$ and assume $c>0$. Thus, $\gtm^*_\alpha\in{\mathcal D}_{\betaa M}(f-\tfrac{c}{2}+|f-\tfrac{c}{2}|)$. Substituting $M$ by $\gr(f)$, we may assume that $f$ can be extended continuously to $X:=\cl(M)$. Denote such extension with $f$. By \cite[4.3\&4.6]{fg5} there exists a continuous surjective map $\rho:\betaa M\to X$ that is the identity on $M$ and $\rho(\partial M)=X\setminus M$. In addition, $\widehat{f}=f\circ\rho$ and $p:=\rho(\gtm_\alpha^*)\in X\setminus M$ satisfies $f(p)=c$. 

Define $Y_0:=\{p\}$, $Y_1:=\{f-\tfrac{c}{2}>0\}$ and $Y_2:=M\cap Y_1$. By \cite[9.2.1]{bcr} there exists a finite simplicial complex $K$ and a semialgebraic homeomorphism $\Phi:|K|\to X$ such that each semialgebraic set $Y_j$ is the union of some $\Phi(\sigma^0)$ where each $\sigma^0$ is the open simplex associated with a simplex $\sigma\in K$. We identify $X$ with $|K|$ and choose a simplex $\tau$ of $K$ of dimension $\geq2$ that has $p$ as a vertex and whose associated open simplex $\tau^0$ is contained in $Y_2$. Let $p_1,p_2\in\tau^0$ be two points that are not colinear with $p$. For the closed triangle $T_1$ with vertices $p,p_1,p_2$ it holds that $T_1\setminus\{p\}\subset\tau\subset Y_2$ is a closed semialgebraic subset of $M$. In addition, $\partial T_1\subset\di_{\betaa M}(f)$. Thus, the differences $\partial T_1\setminus\widehat{\partial}T_1$ and $\widehat{\partial}T_1\setminus\widetilde{\partial}T_1$ are by \ref{redtrianT} non-empty and the open set $\di_{\betaa M}(f)$ meets the differences $\partial M\setminus\widehat{\partial}M$ and $\widehat{\partial}M\setminus \widetilde{\partial}M$ by Lemma \ref{delrel}, as required.
\end{proof}

\subsection{Points of the maximal spectrum with countable basis of neighborhoods}\label{furapp}\setcounter{paragraph}{0}

We show next that all points of $M$ have countable basis of neighborhoods in $\betaa M$. This is trivially true for the points of $M_{\lc}$ as $M_{\lc}$ is open in $\betaa M$. The points of $\rho_1(M)$ require a careful analysis.

\begin{lem}\label{fc}
Let $f\in{\mathcal S^*(M)}$ and $\widehat{f}:\betaa M\to\R$ be its unique continuous extension to $\betaa M$. Let $\gtm\in\betaa M$ be such that $c:=\widehat{f}(\gtm)>0$. Then $\cl_{\betaa M}(f^{-1}((\tfrac{c}{2},+\infty)))=\cl_{\betaa M}(\widehat{f}^{-1}(\tfrac{c}{2},+\infty))$.
\end{lem}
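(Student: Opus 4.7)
The plan is to exploit the density of $M$ in $\betaa M$, together with the openness of $\widehat{f}^{-1}((\tfrac{c}{2},+\infty))$ in $\betaa M$. Note that by the preliminaries in \S\ref{spectracomp}, $\betaa M$ is a Hausdorff compactification of $M$, so $M$ is dense in $\betaa M$, and by \ref{cocr} the extension $\widehat{f}$ is continuous and restricts to $f$ on $M$.

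The inclusion $\subseteq$ is immediate: since $\widehat{f}|_M=f$, we have $f^{-1}((\tfrac{c}{2},+\infty))=\widehat{f}^{-1}((\tfrac{c}{2},+\infty))\cap M\subseteq \widehat{f}^{-1}((\tfrac{c}{2},+\infty))$, and taking closures in $\betaa M$ preserves this inclusion.

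For the non-trivial inclusion $\supseteq$, it suffices to show that $\widehat{f}^{-1}((\tfrac{c}{2},+\infty))\subseteq\cl_{\betaa M}(f^{-1}((\tfrac{c}{2},+\infty)))$ and then pass to closures. Fix $\gtn\in\widehat{f}^{-1}((\tfrac{c}{2},+\infty))$ and let $U$ be any open neighborhood of $\gtn$ in $\betaa M$. The set $V:=U\cap\widehat{f}^{-1}((\tfrac{c}{2},+\infty))$ is open in $\betaa M$ (since $\widehat{f}$ is continuous) and is non-empty as it contains $\gtn$. By density of $M$ in $\betaa M$, the intersection $V\cap M$ is non-empty. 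But $V\cap M\subseteq \widehat{f}^{-1}((\tfrac{c}{2},+\infty))\cap M=f^{-1}((\tfrac{c}{2},+\infty))$, hence $U$ meets $f^{-1}((\tfrac{c}{2},+\infty))$. Thus $\gtn\in\cl_{\betaa M}(f^{-1}((\tfrac{c}{2},+\infty)))$.

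There is no substantive obstacle here; the argument is a formal consequence of the density of $M$ in its maximal spectrum together with the continuity of $\widehat{f}$. The hypothesis $c>0$ does not actually play a distinguished role in the proof itself (the same statement would hold for any real threshold in place of $\tfrac{c}{2}$); its purpose is presumably to ensure that the sets in question are non-empty and contain a neighborhood of $\gtm$, which will matter in whatever subsequent argument (likely the construction of a countable basis of neighborhoods at points of $\rho_1(M)$) invokes this lemma.
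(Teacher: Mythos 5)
Your proof is correct and coincides with the paper's own argument: both reduce the statement to showing $\widehat{f}^{-1}((\tfrac{c}{2},+\infty))\subset\cl_{\betaa M}(f^{-1}((\tfrac{c}{2},+\infty)))$ and establish it by intersecting an arbitrary neighborhood of $\gtn$ with the open set $\widehat{f}^{-1}((\tfrac{c}{2},+\infty))$ and invoking the density of $M$ in $\betaa M$. Your closing remark that the hypothesis $c>0$ is not used in the argument itself is also accurate.
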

\begin{proof}
It is enough to check $\widehat{f}^{-1}((\tfrac{c}{2},+\infty))\subset\cl_{\betaa M}(f^{-1}(\tfrac{c}{2},+\infty))$. Fix $\gtn\in\widehat{f}^{-1}((\tfrac{c}{2},+\infty))$ and let $V$ be a neighborhood of $\gtn$ in $\betaa M$. Then $V\cap\widehat{f}^{-1}((\tfrac{c}{2},+\infty))$ is also a neighborhood of $\gtn$ in $\betaa M$. As $M$ is dense in $\betaa M$,
$$
V\cap f^{-1}((\tfrac{c}{2},+\infty))=V\cap\widehat{f}^{-1}((\tfrac{c}{2},+\infty))\cap M\neq\varnothing.
$$
Thus, $\gtn\in\cl_{\betaa M}(f^{-1}((\tfrac{c}{2},+\infty)))$, as required.
\end{proof}

\begin{prop}\label{cn}
Let $p\in M$ and $\{U_k\}_k$ be a countable basis of neighborhoods of $p$ in $M$. Then $\{\cl_{\betaa M}(U_k)\}_k$ is a countable basis of neighborhoods of $p$ in $\betaa M$.
\end{prop}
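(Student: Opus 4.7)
The plan is to exploit three facts recalled in the preliminaries: (i) $\betaa M$ is a Hausdorff compactification of $M$ (see \ref{spectracomp}), hence a compact Hausdorff space and in particular regular; (ii) the natural embedding $M\hookrightarrow\betaa M$ identifies the Euclidean topology of $M$ with the subspace topology inherited from $\betaa M$ (see \ref{zr}); and (iii) $M$ is dense in $\betaa M$. No subtle use of Lemma \ref{fc} or of the algebraic description of the basic open sets of $\betaa M$ is needed; the statement is purely point-set topological once (i)--(iii) are in place.

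First I would check that each $\cl_{\betaa M}(U_k)$ really is a neighborhood of $p$ in $\betaa M$. By shrinking $U_k$ if necessary we may assume $U_k$ is open in $M$; then by (ii) there exists an open set $W_k\subset\betaa M$ with $W_k\cap M=U_k$ and $p\in W_k$. Density of $M$ gives $W_k\subset\cl_{\betaa M}(W_k\cap M)=\cl_{\betaa M}(U_k)$, so $\cl_{\betaa M}(U_k)$ contains an open neighborhood of $p$.

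Next I would verify the basis property. Let $V$ be an arbitrary open neighborhood of $p$ in $\betaa M$. By regularity of the compact Hausdorff space $\betaa M$ there is an open $W\subset\betaa M$ with
\[
p\in W\subset\cl_{\betaa M}(W)\subset V.
\]
The intersection $W\cap M$ is open in $M$ (again by (ii)) and contains $p$, so by the hypothesis that $\{U_k\}_k$ is a countable basis of neighborhoods of $p$ in $M$ there exists $k$ with $U_k\subset W\cap M\subset W$. Taking closures in $\betaa M$ yields
\[
\cl_{\betaa M}(U_k)\subset\cl_{\betaa M}(W)\subset V,
\]
which together with the previous paragraph shows that $\{\cl_{\betaa M}(U_k)\}_k$ is a countable basis of neighborhoods of $p$ in $\betaa M$.

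The argument has essentially no obstacle; the only points requiring care are to invoke correctly the identification of the Euclidean and subspace topologies on $M$ inside $\betaa M$, and to remember that the Hausdorff compactification $\betaa M$ is automatically normal, so that the shrinking step $p\in W\subset\cl_{\betaa M}(W)\subset V$ is legitimate.
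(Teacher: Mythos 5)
Your proof is correct, and it takes a noticeably more abstract route than the paper's. The paper works with the explicit function-theoretic basis of $\betaa M$: to shrink a neighborhood $W$ of $p$ it picks $f\in{\mathcal S}^*(M)$ with $p\in{\mathcal D}_{\betaa M}(f)\subset W$, passes to $f^{-1}((\tfrac{c}{2},+\infty))$ where $c=\widehat{f}(p)$, and chains closures through $\widehat{f}^{-1}([\tfrac{c}{2},+\infty))$; and to see that $\cl_{\betaa M}(U_k)$ is a neighborhood of $p$ it invokes Lemma \ref{fc}, whose proof is precisely the density argument you use directly ($W\subset\cl_{\betaa M}(W\cap M)$ for $W$ open and $M$ dense). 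You replace the first step by regularity of the compact Hausdorff space $\betaa M$ and the second by the bare density argument, so your proof only uses that $\betaa M$ is a Hausdorff compactification in which $M$ sits densely with its Euclidean topology (\ref{spectracomp} and \ref{zr}); in particular it would apply verbatim to any Hausdorff compactification of $M$. What the paper's version buys is coherence with the rest of the text: the sets ${\mathcal D}_{\betaa M}(f)$ and the extensions $\widehat{f}$ are the working objects throughout (e.g.\ in Theorems \ref{gpn3} and \ref{mn}), and Lemma \ref{fc} is needed again later, so the authors prove the proposition with the same toolkit rather than appealing to abstract normality. Both arguments are complete; yours is the more economical as a standalone proof.
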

\begin{proof}
Let $W$ be an open neighborhood of $p$ in $\betaa M$. Then there exists $f\in{\mathcal S}^*(M)$ such that $p\in{\mathcal D}_{\betaa M}(f)\subset W$. Let $\widehat{f}:\betaa M\to\R$ be the unique continuous extension of $f$ to $\betaa M$. We may assume $\widehat{f}(p)=c>0$ and observe that $f^{-1}((\tfrac{c}{2},+\infty))$ is an open neighborhood of $p$ in $M$. Thus, there exists $k\geq 1$ such that $p\in U_k\subset f^{-1}((\tfrac{c}{2},+\infty))$. Therefore
$$
\cl_{\betaa M}(U_k)\subset\cl_{\betaa M}(f^{-1}((\tfrac{c}{2},+\infty)))\subset\cl_{\betaa M}(\widehat{f}^{-1}((\tfrac{c}{2},+\infty)))\subset\widehat{f}^{-1}([\tfrac{c}{2},+\infty))\subset W.
$$

To finish, let us see that each set $\cl_{\betaa M}(U_k)$ is a neighborhood of $p$ in $\betaa M$. Let $W_k$ be a neighborhood of $p$ in $\betaa M$ such that $U_k=W_k\cap M$. Let $g\in{\mathcal S}^*(M)$ be such that $p\in{\mathcal D}_{\betaa M}(g)\subset W_k$. Then $p\in D(g)\subset U_k$. We may assume $r=g(p)>0$, so $p\in\widehat{g}^{-1}((r/2,+\infty))\subset W_k$. Thus, $p\in g^{-1}((r/2,+\infty))\subset W_k\cap M=U_k $ and by Lemma \ref{fc} 
$$
p\in\widehat{g}^{-1}((r/2,+\infty))\subset\cl_{\betaa M}(\widehat{g}^{-1}((r/2,+\infty)))=\cl_{\betaa M}(g^{-1}((r/2,+\infty)))\subset\cl_{\betaa M}(U_k).
$$
Consequently, $\cl_{\betaa M}(U_k)$ is a neighborhood of $p$ in $\betaa M$, as required.
\end{proof}

We prove next that there exist a lot of points in $\partial M$ that have a countable basis of neighborhoods in $\betaa M$. We denote the open ball of $\R^m$ with center $x$ and radius $\veps>0$ with $\Bb(x,\veps)$. 

\begin{thm}\label{gpn3}
Each point of $\widehat{\partial}M$ has a countable basis of neighborhoods in $\betaa M$.
\end{thm}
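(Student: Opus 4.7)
Let $\gtm^*_\alpha\in\widehat{\partial}M$ with $\alpha\in M_{F_1}$ a formal path and $p:=\alpha(0)\in\cl(M)\setminus M$. After reparametrization (see \ref{picture}), assume $\alpha\in\R[[\t]]^m$ and write $\alpha^{[n]}(\t):=\sum_{k=0}^n a_k\t^k$ for the degree-$n$ truncation of $\alpha(\t)=\sum_k a_k\t^k$. My plan is to mimic Proposition \ref{cn} by building a countable family $\{U_n\}_{n\geq 1}$ of semialgebraic subsets of $M$ such that $\{\cl_{\betaa M}(U_n)\}$ is a neighborhood basis of $\gtm^*_\alpha$ in $\betaa M$. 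The natural candidate combines an outer ball with a semialgebraic ``curvilinear tube'':
$$U_n:=\Bigl\{x\in M:\|x-p\|<\tfrac{1}{n}\text{ and }\exists\,s\in(0,r_n),\,\|x-\alpha^{[n]}(s)\|^2<s^{2n}\Bigr\},$$
where the positive real $r_n$ is chosen small (depending on the $a_k$) to make the estimates of the third step effective; $U_n$ is open in $M$ by Tarski--Seidenberg.

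First I would check $\alpha\in(U_n)_{F_1}$: $\|\alpha-p\|$ is infinitesimal in $F_1$, and taking $s=\t$ one has $\omega(\|\alpha(\t)-\alpha^{[n]}(\t)\|^2)\geq 2(n+1)>2n=\omega(\t^{2n})$. Using the standard description $\cl_{\betaa M}(V)=\{\gtn^*:I(V)\subset\gtn^*\}$ with $I(V):=\{h\in{\mathcal S}^*(M):h|_V\equiv 0\}$ (a consequence of \cite[6.3]{fg3}, as in the proof of Lemma \ref{delrel}), any $h\in I(U_n)$ has $h_{F_1}(\alpha)=0$, so $\gtm^*_\alpha\in\cl_{\betaa M}(U_n)$. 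To see this closure is a neighborhood of $\gtm^*_\alpha$, the bounded semialgebraic function
$$g_n(x):=\max\bigl(0,\tfrac{1}{n}-\|x-p\|\bigr)\cdot\sup_{s\in(0,r_n)}\max\bigl(0,\,1-\|x-\alpha^{[n]}(s)\|^2/s^{2n}\bigr)$$
satisfies $D(g_n)=U_n$, and at $\alpha$ with the choice $s=\t$ both factors have positive constant term ($1/n$ and $1$ respectively, since $\|\alpha-\alpha^{[n]}(\t)\|^2/\t^{2n}$ has positive order hence is infinitesimal), so $\psi_\alpha(g_n)(0)\geq 1/n>0$. Hence $\gtm^*_\alpha\in\Dd_{\betaa M}(g_n)\subset\cl_{\betaa M}(D(g_n))\subset\cl_{\betaa M}(U_n)$.

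The main obstacle is the basis property. Given any neighborhood $W$ of $\gtm^*_\alpha$, pick $f\in{\mathcal S}^*(M)$ with $\gtm^*_\alpha\in\Dd_{\betaa M}(f)\subset W$ and WLOG $c:=\widehat{f}(\gtm^*_\alpha)>0$; by Lemma \ref{fc} it suffices to show $U_n\subset f^{-1}((c/2,+\infty))$ for some $n$, which yields $\cl_{\betaa M}(U_n)\subset\widehat{f}^{-1}([c/2,+\infty))\subset\Dd_{\betaa M}(f)\subset W$. Assume the contrary and pick $x_n\in U_n$ with $f(x_n)\leq c/2$; then $x_n\to p$ and $\|x_n-\alpha^{[n]}(s_n)\|<s_n^n$ for some $s_n\in(0,r_n)$. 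Arguing diagonally, for each fixed $k$ the triangle inequality places $(x_n,s_n)$, for all $n$ large, in the semialgebraic set $E_k:=\{(x,s)\in F\times(0,\veps_k):\|x-\alpha^{[k]}(s)\|^2<C_ks^{2(k+1)}\}$ with $F:=\{f\leq c/2\}$ (the role of $r_n$ is to ensure $\sum_{j>k}|a_j|s_n^j=O(s_n^{k+1})$ uniformly). Hence $(p,0)\in\cl(E_k)$ and curve selection produces algebraic Puiseux series $\xi_k\in F_{F_0}$, $\sigma_k\in\R[[\t^*]]_{\rm alg}$ with $\xi_k(0)=p$, $\sigma_k(0)=0$, and $\|\xi_k-\alpha^{[k]}(\sigma_k)\|^2<C_k\sigma_k^{2(k+1)}$. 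Combined with $\omega(\alpha-\alpha^{[k]})\geq k+1$, this gives $\omega(\xi_k-\alpha(\sigma_k))\geq(k+1)\omega(\sigma_k)$, growing with $k$. A Lojasiewicz--H\"older estimate for the bounded semialgebraic $f$ on a compact neighborhood of $p$ in $\cl(M)$ then forces, for $k$ large, $\psi_{\xi_k}(f)(0)=\psi_\alpha(f)(\sigma_k)(0)=\psi_\alpha(f)(0)=c$, contradicting $\xi_k\in F_{F_0}\Rightarrow\psi_{\xi_k}(f)(0)\leq c/2$. The technical heart is the tuning of $r_n$ to control the constants $C_k$ and the transfer of the H\"older estimate to $F_1$.
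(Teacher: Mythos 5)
Your tube construction is essentially the one the paper uses (neighborhoods of $\gtm^*_\alpha$ cut out by $\|x-\gamma_\ell(x_1)\|<x_1^{\ell+1}$ around polynomial truncations $\gamma_\ell$ of $\alpha$; the paper first normalizes $\alpha_1(\t)=\t$ so that $x_1$ itself is the tube parameter and no existential quantifier over $s$ is needed), and your first two steps ($\gtm^*_\alpha\in\cl_{\betaa M}(U_n)$ and the neighborhood property) are sound. The gap is in the basis step, precisely at the ``Lojasiewicz--H\"older estimate for the bounded semialgebraic $f$ on a compact neighborhood of $p$ in $\cl(M)$.'' No such estimate exists: $f$ is only defined on $M$, $p\in\cl(M)\setminus M$, and a bounded semialgebraic function on $M$ need not satisfy any uniform H\"older bound near the puncture. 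The paper's own functions from the proof of Theorem \ref{diferencias-s}, $f_k(x,y)=(y-p_k(x))^2/((y-p_k(x))^2+x^{2k})$ on the punctured triangle, are counterexamples: they take the values $0$ and $\approx 1$ at pairs of points whose distance is $O(x^{k})$, for $x$ arbitrarily small. So the final implication $\psi_{\xi_k}(f)(0)=\psi_{\alpha\circ\sigma_k}(f)(0)$ ``for $k$ large'' cannot be extracted from metric closeness of $\xi_k$ to $\alpha\circ\sigma_k$ by a H\"older argument, and as stated your contradiction does not close.

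What is true --- and is the actual engine of the paper's proof --- is a finite-determinacy statement obtained by passing from $f$ to polynomials: since $V:=f^{-1}((\tfrac{c}{2},+\infty))$ is open semialgebraic and $\alpha\in V_{F_1}$, one can choose $g_1,\ldots,g_r\in\R[\x]$ with $\alpha\in\{g_1>0,\ldots,g_r>0\}_{F_1}\cap M_{F_1}\subset V_{F_1}$, and the sign of a fixed polynomial on a formal path is determined by finitely many terms of that path (step \ref{piccolo}; Lemma \ref{positivity}). With this substitute your scheme can be completed: for $k$ large your curve-selection paths $\xi_k$, which agree with $\alpha\circ\sigma_k$ to relative order $k+1$, satisfy $g_i(\xi_k)>0$ for all $i$, hence $\xi_k\in V_{F_0}$ and $\psi_{\xi_k}(f)(0)\geq\tfrac{c}{2}$, contradicting $\xi_k\in\{f\leq\tfrac{c}{2}\}_{F_0}$. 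Note that the paper avoids your diagonal/contradiction detour altogether: it proves directly that $U_{\ell,k}\subset\widehat{g}^{-1}([\tfrac{c}{2},+\infty))$ for suitable $\ell,k$, handling $U_{\ell,k}\cap M$ by a Taylor expansion of the $g_i$ along the tube and $U_{\ell,k}\cap\partial M$ by the density of $\widehat{\partial}M$ in $\partial M$ (Lemma \ref{deltam}) --- this last case is one your argument also needs and does not address, since your sequence $x_n$ only tests points of $M$, not of $\partial M$, and $\cl_{\betaa M}(U_n)$ contains points of the remainder other than $\gtm_\alpha^*$.
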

\begin{proof}
Let $\alpha:=(\alpha_1,\ldots,\alpha_m)\in M_{F_1}$ be a formal path such that $\alpha(0)\in\cl(M)\setminus M$. Our aim is to construct a countable basis of neighborhoods for $\gtm_\alpha^*$ in $\betaa M$. 

\paragraph{}\label{piccolino1}
We may assume: \em $M\subset\{x_1>0\}$, $\alpha(0)=0$ and $\alpha_1(\t)=\t$\em. 

After a change of coordinates in $\R^m$ we may assume $\alpha(0)=0$ and that $\alpha_1$ is not a constant. Considering the embedding of $\R^m$ in $\R^{m+1}$ given by 
$$
(x_1,\ldots,x_m)\mapsto(x_1^2+\cdots+x_m^2,x_1,\ldots,x_m)=(y_1,\ldots,y_{m+1}),
$$ 
we can suppose $M\subset\{y_1>0\}$. After reparameterizing $\alpha$, we assume $\alpha_1(\t)=\t^p$ for some integer $p\geq1$. This in combination with the new change of coordinates 
$$
h:(0,+\infty)\times\R^{m}\to(0,+\infty)\times\R^{m},\ (y_1,y_2,\ldots,y_{m+1})\mapsto(\sqrt[p]{y_1},y_2,\ldots,y_{m+1})
$$
allows us to suppose $\alpha_1(\t)=\t$.

\paragraph{}\label{piccolino2}
For each integer $\ell\geq 1$ consider polynomials $\gamma_{2\ell},\ldots,\gamma_{m\ell}\in\R[\t]$ such that $\alpha_j-\gamma_{j\ell}\in(\t)^{\ell+1}\subset\R[[\t]]$ and let $L_\ell>0$ be such that $|\gamma_{j\ell}(t)|<L_\ell$ for $|t|\leq1$ and $j=2,\ldots,m$. Denote $\gamma_\ell(\t):=(\t,\gamma_{2\ell}(\t),\ldots,\gamma_{m\ell}(\t))$ and consider the family of semialgebraic functions on $M$
$$
\begin{cases}
f_\ell(x):=x_1^{2\ell+2}-\|x-\gamma_\ell(x_1)\|^2=x_1^{2\ell+2}-\sum_{j=2}^m(x_j-\gamma_{j\ell}(x_1))^2&\text{for}\ \ell\geq 1,\\[4pt] 
h_k(x):=\frac{1}{k^2}-x_1^2&\text{for}\ k\geq 1
\end{cases}
$$
and the family of open subsets $U_{\ell,k}:={\mathcal D}_{\betaa M}(f_\ell+|f_\ell|,h_k+|h_k|)$ of $\betaa M$. Note that $\gtm_\alpha^*\in U_{\ell,k}$ for all $\ell,k\geq 1$. 

\paragraph{}\label{piccolino20}
Our goal is to see: \em $\{U_{\ell,k}\}_{\ell,k}$ is a basis of neighborhoods of $\gtm_\alpha^*$ in $\betaa M$\em. 

Fix $g\in{\mathcal S}^*(M)$ such that $\gtm^*_\alpha\in{\mathcal D}_{\betaa M}(g)$ and assume $\widehat{g}(\gtm^*_\alpha)=c>0$. We write $V:=g^{-1}((\tfrac{c}{2},+\infty))$. Notice that $\alpha\in V_{F_1}$ and choose polynomials $g_1,\ldots,g_r\in\R[\x]$ such that $V_1:=\{g_1>0,\ldots,g_r>0\}$ satisfies $\alpha\in(V_1\cap M)_{F_1}\subset V_{F_1}$. 

Consider the new variables $\s$, $\y:=(\y_1,\ldots,\y_m)$ and $\z:=(\z_1,\ldots,\z_m)$, write $\x=\y+\s\z$ and 
$$
g_i(\x)=g_i(\y+\s\z)=g_i(\y)+\s H_i(\s,\y,\z)
$$
where $H_i(\s,\y,\z):=\sum_{j=1}^{s_i-1}h_{ij}(\y,\z)\s^j$ for some polynomials $h_{i1},\ldots,h_{i,s_i-1}\in\R[\y,\z]$. Let $C_\ell>0$ be a large enough real number such that 
$$
\text{$|H_i(s,y,z)|<C_\ell$\quad for $|s|\leq1,|z_j|\leq1,|y_1|\leq1,|y_2|\leq L_\ell,\ldots,|y_m|\leq L_\ell$},
$$
$j=1,\ldots,m$ and $i=1,\ldots,r$. 

\paragraph{}\label{piccolo}
As the positivity of a finite family of polynomials on a formal path depends only on finitely many terms of its components, \em there exists $\ell_0\geq1$ such that for all $\ell\geq \ell_0$ every formal path $\eta\in\R[[\t]]^m$ with $\|\eta(\t)-\alpha(\t^p)\|^2\in(\t)^{2\ell p}$ for some $p\geq1$ satisfies $g_i(\eta(\t))>0$ for $i=1,\ldots,r$\em. In particular: \em If $\ell\geq\ell_0$, each series $g_i(\gamma_{\ell}(\t))$ is positive\em. 

\paragraph{}\label{piccolo2}
Denote $\ell:=1+\max\{\ell_0,\omega(g_i(\alpha(\t))):\ i=1,\ldots,r\}$ and choose $k_0\geq 1$ such that $g_i(\gamma_{\ell}(t))>0$ for $i=1,\ldots,r$ if $0<t<1/k_0$. Since $\ell>\omega(g_i(\alpha(\t)))$ for $\ i=1,\ldots,r$, there exists $k\geq k_0$ such that $
g_i(\gamma_{\ell}(t))-t^{\ell+1}C_\ell>0$ for $0<t\leq1/k$ and $i=1,\ldots,r$. 

\paragraph{}\label{piccolo3}
For our purposes it is enough to check: $U_{\ell,k}\subset\widehat{g}^{-1}([\tfrac{c}{2},+\infty))$. 

Fix a point $x\in U_{\ell,k}\cap M$. Then $0<x_1<1/k$ and $\sum_{j=2}^m(x_j-\gamma_{j\ell}(x_1))^2 < x_1^{2\ell+2}$. Thus, $|x_j-\gamma_{j\ell}(x_1)|<x_1^{\ell+1}$ for $j=2,\ldots,m$ and so $x_j=\gamma_{j\ell}(x_1)+\rho_jx_1^{\ell+1}$ for some $\rho_j\in\R$ such that $|\rho_j|<1$. Write $\rho:=(0,\rho_2,\ldots,\rho_m)$ and observe that by \ref{piccolo2}
$$
g_i(x)=g_i(\gamma_{\ell}(x_1))+x_1^{\ell+1}H_i(x_1^{\ell+1},\gamma_{\ell}(x_1),\rho)\\
>g_i(\gamma_{\ell}(x_1))-x_1^{\ell+1}C_\ell>0;
$$
hence, $x\in\{g_1>0,\ldots,g_r>0\}\cap M\subset V\subset\widehat{g}^{-1}([\tfrac{c}{2},+\infty))$.

Now we check $U_{\ell,k}\cap\partial M\subset\widehat{g}^{-1}([\tfrac{c}{2},+\infty))$. As $U_{\ell,k}$ is open in $\betaa M$ and $\widehat{\partial}M$ is dense in $\partial M$ (see Lemma \ref{deltam}), it is enough to show that $U_{\ell,k}\cap\widehat{\partial}M$ is contained in $\widehat{g}^{-1}([\tfrac{c}{2},+\infty))$. To that end it is sufficient to prove that $\mu\in\{g_1>0,\ldots,g_r>0\}_{F_1}$ for all formal paths $\mu\in (U_{\ell,k}\cap M)_{F_1}$. Indeed, $\mu_1(\t)>0$ because $\mu\in M_{F_1}$ and $M\subset\{y_1>0\}$. After reparameterizing we may assume $\mu_1(\t)=\t^p$ for some $p\geq 1$. Since $\mu\in(U_{\ell,k})_{F_1}$, we get $\|\mu(\t)-\gamma_\ell(\t^p)\|^2<\t^{2(\ell+1)p}$, so $\|\mu(\t)-\gamma_{\ell}(\t^p)\|^2\in(\t)^{2(\ell+1)p}$. As $\|\alpha(\t)-\gamma_{\ell}(\t)\|^2\in(\t)^{2(\ell+1)}$, we deduce $\|\mu(\t)-\alpha(\t^p)\|^2\in(\t)^{2(\ell+1)p}$ and therefore by \ref{piccolo} $g_i(\mu(\t))>0$ for each index $i=1,\ldots,r$, that is, $\mu\in\{g_1>0,\ldots,g_r>0\}_{F_1}$.

We conclude $U_{\ell,k}=(U_{\ell,k}\cap M)\cup(U_{\ell,k}\cap\partial M)\subset\widehat{g}^{-1}([\tfrac{c}{2},+\infty))\subset{\mathcal D}_{\betaa M}(g)$, as required.
\end{proof}

\begin{cor}\label{neigh0}
Let $h\in{\mathcal S}^*(M)$, $\widehat{h}:\betaa M\to\R$ be its unique continuous extension to $\betaa M$ and $H:=\widehat{h}|_{\partial M}:\partial M\to\R$. Then
\begin{itemize}
\item[(i)] The set $Z_{\partial M}(H)$ is a closed neighborhood in $\partial M$ of each free maximal ideal $\gtm_\alpha^*\in(\widehat{\partial}M\setminus\cl_{\betaa M}(\rho_1(M)))\cap Z_{\partial M}(H)$. 
\item[(ii)] $Z_{\partial M}(H)=\cl_{\partial M}(\Int_{\partial M}(Z_{\partial M}(H)))$.
\item[(iii)] If $Z_{\partial M}(H)$ is a singleton $\{\gtm^*\}$, then $\gtm^*$ is an endpoint of $\betaa M$ and it belongs to $\cl_{\betaa M}(\ol{M}_{\leq1})\setminus\ol{M}_{\leq1}$.
\end{itemize}
\end{cor}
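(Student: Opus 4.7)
The plan is to prove the three assertions (i), (ii), (iii) in order, relying on the countable basis of neighborhoods $\{U_{\ell,k}\}_{\ell,k\geq 1}$ of $\gtm_\alpha^*$ in $\betaa M$ constructed in Theorem \ref{gpn3}, together with Lemma \ref{deltam} and the topological results of Section \ref{s3}.

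For (i), the set $Z_{\partial M}(H)$ is automatically closed in $\partial M$ by continuity of $H=\widehat{h}|_{\partial M}$. To establish that it is a neighborhood of $\gtm_\alpha^*$, the goal is to exhibit indices $\ell,k$ with $U_{\ell,k}\cap\partial M\subseteq Z_{\partial M}(H)$. I adopt the coordinate reduction of \ref{piccolino1} (so that $\alpha_1(\t)=\t$ and $\alpha(0)=0$) and use $\gtm_\alpha^*\notin\cl_{\betaa M}(\rho_1(M))$ to pick $\ell_0,k_0$ with $U_{\ell_0,k_0}\cap\rho_1(M)=\varnothing$; then for $\ell,k$ sufficiently large the closed semialgebraic horn
\[
T_{\ell,k}:=\bigl\{x\in M:\|x-\gamma_\ell(x_1)\|^{2}\leq x_1^{2\ell+2},\ 0<x_1\leq\tfrac{1}{k}\bigr\}
\]
is contained in $U_{\ell_0,k_0}\cap M\subseteq M_{\lc}$. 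Hence $T_{\ell,k}$ is locally compact and $\cl_{\betaa M}(T_{\ell,k})\equiv\betaa T_{\ell,k}$ by \ref{closedbeta1}. The central technical step is a semialgebraic \L ojasiewicz-type estimate on $T_{\ell,k}$: the bounded semialgebraic function
\[
\phi_{\ell,k}(r):=\sup\bigl\{|h(x)|:\,x\in T_{\ell,k},\,x_1\leq r\bigr\}
\]
satisfies $\phi_{\ell,k}(r)\to 0$ as $r\to 0^{+}$ for $\ell$ sufficiently large; otherwise the curve selection lemma inside $T_{\ell,k}\subset M_{\lc}$ produces a semialgebraic path $\nu$ with $\nu(0)=\alpha(0)$, $\gtm_\nu^*$ lying in every $\cl_{\betaa M}(U_{\ell,k})=\{\gtm_\alpha^*\}$ by compactness and countability of the basis, yet with $\widehat{h}(\gtm_\nu^*)\neq 0$, contradicting $\widehat{h}(\gtm_\alpha^*)=0$. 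This vanishing of $\phi_{\ell,k}$ forces $\psi_\mu(h)(0)=0$ for every formal path $\mu\in(T_{\ell,k})_{F_1}$, and by density of $\widehat{\partial}M$ in $\partial M$ (Lemma \ref{deltam}) together with closedness of $Z_{\partial M}(H)$ one concludes $U_{\ell,k}\cap\partial M\subseteq Z_{\partial M}(H)$.

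For (ii), the inclusion $\supseteq$ is immediate since $Z_{\partial M}(H)$ is closed. For $\subseteq$, take $\gtn^*\in Z_{\partial M}(H)$ and a basic open neighborhood $V={\mathcal D}_{\betaa M}(f_1,\ldots,f_r)\cap\partial M$ of $\gtn^*$. Applying Lemma \ref{deltam}(i) to the tuple $(h,f_1,\ldots,f_r,g)$ with $g:=\min(1,\dist(\cdot,\rho_1(M)))\in{\mathcal S}^*(M)$ produces a semialgebraic path $\beta$ with $\gtm_\beta^*\in V$, $\widehat{h}(\gtm_\beta^*)=0$, and $\widehat{g}(\gtm_\beta^*)=\widehat{g}(\gtn^*)$; when $\widehat{g}(\gtn^*)>0$ the latter is positive, so $\gtm_\beta^*\notin\cl_{\betaa M}(\rho_1(M))$ and (i) gives $\gtm_\beta^*\in V\cap\Int_{\partial M}(Z_{\partial M}(H))$. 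For the remaining case $\widehat{g}(\gtn^*)=0$, I approximate $\gtn^*$ by points outside $\widehat{g}^{-1}(0)$, which are dense because $M_{\lc}$ is dense in $\betaa M$ and hence $\cl_{\betaa M}(\rho_1(M))$ is nowhere dense in $\betaa M$. For (iii), if $Z_{\partial M}(H)=\{\gtm^*\}$, then (ii) yields $\{\gtm^*\}=\cl_{\partial M}(\Int_{\partial M}(\{\gtm^*\}))$; since the closure of $\varnothing$ is $\varnothing$, the interior must be non-empty, so $\{\gtm^*\}$ is open in $\partial M$, i.e., $\gtm^*$ is an isolated point of $\partial M$. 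The proof of Lemma \ref{delrel}(ii) shows that isolated points of $\partial M$ belong to $\partial\ol{M}_{\leq 1}\equiv\cl_{\betaa M}(\ol{M}_{\leq 1})\setminus\ol{M}_{\leq 1}$, and Corollary \ref{eum}(i) identifies this set as a subset of $\eta(\betaa M)$, so $\gtm^*$ is an endpoint of $\betaa M$ lying in $\cl_{\betaa M}(\ol{M}_{\leq 1})\setminus\ol{M}_{\leq 1}$.

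The main obstacle is the uniform \L ojasiewicz-type control in (i): quantifying how tightly $\mu\in(T_{\ell,k})_{F_1}$ is forced to track $\alpha$ as $\ell\to\infty$ and translating this Taylor-order closeness into the vanishing $\psi_\mu(h)(0)=0$. The hypothesis $\gtm_\alpha^*\notin\cl_{\betaa M}(\rho_1(M))$ is essential here, as it confines the analysis to the locally compact horn $T_{\ell,k}\subset M_{\lc}$, where $h$ exhibits no pathology inherited from the non-locally-compact part of $M$; the compactness of a suitable compactification of $T_{\ell,k}$ is what makes the $\phi_{\ell,k}\to 0$ argument go through. Once (i) is available, the density step in (ii) and the topological deduction of (iii) are routine.
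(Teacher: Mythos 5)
Your reduction of (ii) and (iii) to (i) is essentially the paper's route (the paper also gets (ii) by producing, via Lemma \ref{deltam}(i), a path-ideal in the given open set with the same values of $\widehat{h}$ and of an auxiliary function separating it from $\cl_{\betaa M}(\rho_1(M))$, and gets (iii) from the resulting isolatedness), so the weight of the argument rests entirely on (i). There your approach diverges from the paper's: the paper transfers the problem to $M_{\lc}$ via the homeomorphism $\betaa M_{\lc}\setminus(\betaa{\tt j})^{-1}(\cl_{\betaa M}(\rho_1(M)))\to\betaa M\setminus\cl_{\betaa M}(\rho_1(M))$ and then invokes Corollary \ref{ppp} together with \cite[6.1]{fe1}, which is where the genuine analytic content lives. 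You instead try to prove the key vanishing directly on the horns $T_{\ell,k}$, and this is where the proof breaks.

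The gap is in the claim that $\phi_{\ell,k}(r)\to 0$ as $r\to0^+$ for $\ell$ large. A failure of this for a \emph{fixed} $(\ell,k)$ produces, by curve selection, a semialgebraic path $\nu$ inside $T_{\ell,k}$ with $|h\circ\nu|\geq\veps$ near $0$; but $\nu$ lies only in that one horn, so all you can conclude is $\gtm_\nu^*\in\cl_{\betaa M}(T_{\ell,k})$, \emph{not} $\gtm_\nu^*\in\bigcap_{\ell',k'}\cl_{\betaa M}(U_{\ell',k'})=\{\gtm_\alpha^*\}$. Hence $\widehat{h}(\gtm_\nu^*)\neq0$ does not contradict $\widehat{h}(\gtm_\alpha^*)=0$ (the horn $T_{\ell,k}$ contains a continuum of distinct path-ideals at which $\widehat h$ could a priori be nonzero). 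A compactness/finite-intersection argument over all horns only yields: for each $\veps>0$ there exist $\ell,k,r$ (depending on $\veps$) with $|h|<\veps$ on $T_{\ell,k}\cap\{x_1\leq r\}$ -- which is merely a restatement of $\widehat{h}(\gtm_\alpha^*)=0$ and does not give a single horn on which $\widehat h$ vanishes identically at infinity. That uniformity is exactly the content of (i), and it genuinely requires the coheight-one property of Corollary \ref{ppp} (equivalently \cite[6.1]{fe1}) on the locally compact piece: statement (i) fails for general points of $\partial M$ (indeed $\partial M\setminus\widehat{\partial}M$ is dense by Theorem \ref{diferencias-s}), yet your argument never uses Corollary \ref{ppp} or any substitute for it. Secondarily, in (ii) your fallback case $\widehat{g}(\gtn^*)=0$ with $g=\min(1,\dist(\cdot,\rho_1(M)))$ is not closed up: you need the approximating points to lie in $Z_{\partial M}(H)\cap V$ as well as outside $\widehat{g}^{-1}(0)$, and density of $\betaa M\setminus\widehat{g}^{-1}(0)$ in $\betaa M$ does not provide that; the paper avoids this by choosing, via \cite[4.10]{fg5}, a function $b$ adapted to the point $\gtm^*$ with $\widehat{b}(\gtm^*)=1$ and $\widehat{b}|_{\cl_{\betaa M}(\rho_1(M))}=0$ and feeding it into Lemma \ref{deltam}(i) alongside $h$ and $g$.
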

\begin{proof}
(i) Consider the map $\betaa\,{\tt j}:\betaa M_{\lc}\to\betaa M$ induced by the inclusion ${\tt j}:M_{\lc}\hookrightarrow M$. Recall that if $Y:=\rho_1(M)$, then by \cite[6.7(ii)]{fg3} the restriction 
$$
\betaa\,{\tt j}|:\betaa M_{\lc}\setminus(\betaa\,{\tt j})^{-1}(\cl_{\betaa M}(Y))\to\betaa M\setminus\cl_{\betaa M}(Y)
$$ 
is a homeomorphism. Consequently, it holds
$$
\widehat{\partial}M\setminus\cl_{\betaa M}(Y)=\betaa\,{\tt j}(\widehat{\partial}M_{\lc}\setminus(\betaa\,{\tt j})^{-1}(\cl_{\betaa M}(Y))).
$$

Let $\widehat{h\circ{\tt j}}=\widehat{h}\circ(\betaa\,{\tt j}):\betaa M_{\lc}\to\R$ be the (unique) continuous extension of $h\circ{\tt j}$ to $\betaa M_{\lc}$ and consider its restriction 
$$
\widehat{h\circ{\tt j}}|_{\partial M_{\lc}}=\widehat{h}\circ(\betaa\,{\tt j})|_{\partial M_{\lc}}:\partial M_{\lc}\to\R. 
$$
By Corollary \ref{ppp} and \cite[6.1]{fe1} we deduce that $Z_{\partial M_{\lc}}(\widehat{h\circ{\tt j}}|_{\partial M_{\lc}})$ is a neighborhood of $\gtn_\alpha^*:=(\betaa\,{\tt j})^{-1}(\gtm^*_\alpha)$ in $\partial M_{\lc}$. In addition, $\gtn_\alpha^*\not\in(\betaa\,{\tt j})^{-1}(\cl_{\betaa M}(Y))$ because $\gtm^*_\alpha\not\in\cl_{\betaa M}(Y)$. Therefore $Z_{\partial M_{\lc}}(\widehat{h}\circ(\betaa\,{\tt j})|_{\partial M_{\lc}})\setminus(\betaa\,{\tt j})^{-1}(\cl_{\betaa M}(Y))$ is a neighborhood of $\gtn_\alpha^*$ in $\partial M_{\lc}\setminus(\betaa\,{\tt j})^{-1}(\cl_{\betaa M}(Y))$. Taking images under $\betaa\,{\tt j}$, we conclude: \em $Z_{\partial M}(H)$ is a closed neighborhood of $\gtm_\alpha^*$ in $\partial M$\em.

(ii) We prove the non-obvious inclusion in (ii). Let $\gtm^*\in Z_{\partial M}(H)$ and $g\in{\mathcal S}^*(M)$ be such that $\gtm^*\in \di_{\betaa M}(g)$. We must prove that $\di_{\betaa M}(g)$ meets $\Int_{\partial M}(Z_{\partial M}(H))$. By \cite[4.10]{fg5} there exists $b\in{\mathcal S}^*(M)$ whose continuous extension $\widehat{b}$ to $\betaa M$ satisfies $\widehat{b}(\gtm^*)=1$ and $\widehat{b}|_{\cl_{\betaa M}(Y)}=0$. Let $\widehat{g}:\betaa M\to\R$ be the unique continuous extension of $g$ to $\betaa M$. By Lemma \ref{deltam}(i) there exists $\gtm_\alpha^*\in\widetilde{\partial}M$ such that $\widehat{h}(\gtm_\alpha^*)=\widehat{h}(\gtm^*)=0$, $\widehat{g}(\gtm_\alpha^*)=\widehat{g}(\gtm^*)\neq 0$ and $\widehat{b}(\gtm_\alpha^*)=\widehat{b}(\gtm^*)= 1$. Consequently, $\gtm_\alpha^*\in\widetilde{\partial}M\setminus\cl_{\betaa M}(Y)\subset\widehat{\partial}M\setminus\cl_{\betaa M}(Y)$ and $\gtm_\alpha^*\in\di_{\betaa M}(g)\cap Z_{\partial M}(H)$. Using (i), this implies $\gtm_\alpha^*\in\Int_{\partial M}(Z_{\partial M}(H))$ and we are done. 

(iii) We assume that $M$ is bounded. By Lemma \ref{deltam}(i) there exists a semialgebraic path $\alpha\in\R[[\t]]_{\rm alg}$ such that $\gtm^*=\gtm_{\alpha}^*$. We may assume that $\alpha$ defines a semialgebraic homeomorphism $\alpha:[0,1]\to M\cup\{\alpha(0)\}$. By (ii) we know that $\{\gtm_{\alpha}^*\}$ is an open and closed subset of $\partial M$, so it is a connected component of $\partial M$. Let $U$ be a closed neighborhood of $\gtm_{\alpha}^*$ in $\betaa M$ such that $\partial M\cap U=\{\gtm_{\alpha}^*\}$ and $U\cap M$ is a semialgebraic set. We have $U=(U\cap M)\cup\{\gtm_{\alpha}^*\}$ and $\partial U=\{\gtm_{\alpha}^*\}$. Let $N:=\alpha([0,1))$ and assume $N\subset U$. As $\partial U\subset\cl_{\betaa M}(N)$, we conclude by \cite[5.7]{fg5} that after shrinking $U$, we may assume $U=N\cup\{\gtm_{\alpha}^*\}$. Consequently, $\gtm_{\alpha}^*$ is an endpoint of $\betaa M$ and by Corollary \ref{eum}(i) $\gtm_{\alpha}^*\in\cl_{\betaa M}(\ol{M}_{\leq1})\setminus\ol{M}_{\leq1}$, as required.
\end{proof}
\begin{cor}\label{eum2}
The set $\Ee$ of maximal ideals of ${\mathcal S}^*(M)$ that are the Jacobson radical of a principal ideal of ${\mathcal S}^*(M)$ equals $M\cup\eta(\betaa M)$.
\end{cor}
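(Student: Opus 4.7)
The plan is to prove the two inclusions $\Ee\supset M\cup\eta(\betaa M)$ and $\Ee\subset M\cup\eta(\betaa M)$ separately; the reverse inclusion is a quick application of Corollary \ref{neigh0}(iii), while the forward inclusion is handled by constructing explicit principal ideals whose Jacobson radical is $\gtm_a^*$ for $a\in M$ and $\gtm^*$ for $\gtm^*\in\eta(\betaa M)\setminus M$.

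For $M\subset\Ee$, given $a\in M$ I would use the bounded semialgebraic function
$$
f_a(x):=\frac{\|x-a\|^2}{1+\|x-a\|^2}\in{\mathcal S}^*(M).
$$
Clearly $f_a\in\gtm_a^*$ and no other fixed maximal ideal contains $f_a$. If a free maximal ideal $\gtn^*\in\partial M$ contained $f_a$, Lemma \ref{deltam}(i) would supply a semialgebraic path $\beta\in M_{F_0}$ with $\beta(0)\in\cl(M)\setminus M$ and $\widehat{f_a}(\gtm_\beta^*)=\|\beta(0)-a\|^2/(1+\|\beta(0)-a\|^2)=0$, forcing $\beta(0)=a\in M$, a contradiction. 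Hence $\gtm_a^*$ is the only maximal ideal containing $f_a$, so it is the Jacobson radical of $(f_a)$ and $a\in\Ee$.

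For $\gtm^*\in\eta(\betaa M)\setminus M$, Corollary \ref{eum}(i) places $\gtm^*$ in the finite set $\cl_{\betaa M}(\ol{M}_{\leq1})\setminus\ol{M}_{\leq1}$, and the proof of Corollary \ref{neigh0}(iii) supplies a semialgebraic path $\alpha:[0,1]\to\cl(M)$ with $\alpha(0)\notin M$, $\alpha((0,1])\subset M$ and $\gtm^*=\gtm_\alpha^*$, together with a closed neighborhood $U=\alpha((0,1])\cup\{\gtm^*\}$ of $\gtm^*$ in $\betaa M$ such that $\{\gtm^*\}$ is a connected component of $\partial M$. Using a semialgebraic Urysohn-type function (cf. \cite[4.10]{fg5}) separating $\{\gtm^*\}$ from the closed set $\cl_{\betaa M}(M\setminus U)\supseteq\partial M\setminus\{\gtm^*\}$ and multiplying it by $\|x-\alpha(0)\|^2/(1+\|x-\alpha(0)\|^2)$ to kill residual zeros on $M$, I would build $f\in{\mathcal S}^*(M)$, strictly positive on $M$, with $\widehat{f}(\gtm^*)=0$ and $\widehat{f}>0$ on $\partial M\setminus\{\gtm^*\}$. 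Lemma \ref{deltam}(i) then forces $\ceros_{\betaa M}(\widehat{f})=\{\gtm^*\}$, so $\gtm^*$ is the Jacobson radical of $(f)$ and $\gtm^*\in\Ee$.

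For the reverse inclusion, pick $\gtm^*\in\Ee$ together with a generator $f$ of a principal ideal whose Jacobson radical is $\gtm^*$. Since the Jacobson radical of $(f)$ is the intersection of all maximal ideals containing $f$ and equals $\gtm^*$, the ideal $\gtm^*$ is the unique maximal ideal containing $f$, so $\ceros_{\betaa M}(\widehat{f})=\{\gtm^*\}$. If $\gtm^*\in M$ we are done; otherwise $\gtm^*\in\partial M$ and $\ceros_{\partial M}(\widehat{f}|_{\partial M})=\{\gtm^*\}$ is a singleton. Applying Corollary \ref{neigh0}(iii) with $h:=f$ yields $\gtm^*\in\cl_{\betaa M}(\ol{M}_{\leq1})\setminus\ol{M}_{\leq1}\subset\eta(\betaa M)$, which finishes the proof. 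The main technical obstacle is the explicit construction of $f$ in the preceding paragraph; arranging strict positivity on $M$ together with the prescribed vanishing set on $\betaa M$ requires a careful combination of a semialgebraic Urysohn function, the local parametrization by $\alpha$ and a distance-like positive function, followed by a verification via Lemma \ref{deltam}(i) that the extension does not acquire spurious zeros among the free maximal ideals.
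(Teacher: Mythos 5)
Your overall strategy coincides with the paper's: the inclusion $\Ee\subset M\cup\eta(\betaa M)$ via Corollary \ref{neigh0}(iii) is exactly the argument given there, and your treatment of $M\subset\Ee$ (only the choice of generator differs cosmetically) is correct. The gap is in the construction of the generator for $\gtm^*\in\eta(\betaa M)\setminus M$. \emph{Multiplying} the Urysohn function $u$ by the positive function $d(x):=\|x-\alpha(0)\|^2/(1+\|x-\alpha(0)\|^2)$ does not ``kill residual zeros'': multiplication by a positive function preserves the zero set of $u$, so if $u$ vanishes at some points of $U\cap M$ (which a Urysohn function separating $\{\gtm^*\}$ from $\cl_{\betaa M}(M\setminus U)$ is perfectly allowed to do), the product still vanishes there and the corresponding fixed maximal ideals also contain $f$. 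Worse, the product acquires \emph{new} zeros on the remainder: $\widehat{d}$ vanishes at every $\gtn^*\in\partial M$ lying over the point $\alpha(0)$, and there may be a continuum of such points when $M_{\geq2}$ accumulates at $\alpha(0)$, so $Z_{\partial M}(\widehat{f}|_{\partial M})$ need not be the singleton $\{\gtm^*\}$. Hence both claimed properties, ``$f$ strictly positive on $M$'' and ``$\widehat{f}>0$ on $\partial M\setminus\{\gtm^*\}$'', fail for the function as you define it.

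The repair is small. Either replace the product by the sum $f:=u+d$ — then $f>0$ on $M$ because $u\geq0$ and $d>0$ there, $\widehat{f}\geq\widehat{u}=1$ on $\cl_{\betaa M}(M\setminus U)\supset\partial M\setminus\{\gtm^*\}$, and $\widehat{f}(\gtm^*)=\widehat{u}(\gtm^*)+\widehat{d}(\gtm_\alpha^*)=0+\lim_{t\to0^+}d(\alpha(t))=0$ — or, as the paper does, dispense with Urysohn altogether and define the generator directly from the homeomorphism $\xi:U\to[0,1]$ (with $\xi(\gtm^*)=0$ and $U\setminus\{\gtm^*\}\subset M$ semialgebraic) by setting $g:=\xi$ on $U\cap M$ and $g:=1$ on $M\setminus U$; then $g>0$ on $M$, $\widehat{g}$ vanishes only at $\gtm^*$, and the Jacobson radical of $g{\mathcal S}^*(M)$ is $\gtm^*$. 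A further small point: you should extract the neighborhood $U\cong[0,1)$ directly from the definition of $\gtm^*$ being an endpoint of $\betaa M$ together with Corollary \ref{eum}(i), not from ``the proof of Corollary \ref{neigh0}(iii)'', since that proof presupposes a function $h$ with $Z_{\partial M}(H)=\{\gtm^*\}$ — which is precisely the object you are trying to build.
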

\begin{proof}
Assume $M$ is bounded. The inclusion $\Ee\subset M\cup\eta(\betaa M)$ follows from Corollary \ref{neigh0}(iii). To prove the converse inclusion observe that $M\subset\Ee$ (the Jacobson radical of the ideal generated by the distance function to a point $a\in M$ restricted to $M$ is $\gtm^*_a$). Pick a point $\gtm^*\in\eta(\betaa M)\setminus M$. By Corollary \ref{neigh0}(iii) $\gtm^*\in\cl_{\betaa M}(\ol{M}_{\leq1})\setminus\ol{M}_{\leq1}$. Let $U\subset\betaa M$ be a compact neighborhood of $\gtm^*$ equipped with a homeomorphism $\xi:U\to[0,1]$ such that $\xi(\gtm^*)=0$, the difference $U\setminus\{\gtm^*\}\subset M$ is a semialgebraic set and $\xi|_{U\setminus\{\gtm^*\}}$ is a semialgebraic map. Let $g:M\to[0,1]$ be the bounded semialgebraic function given by
$$
g(x):=\begin{cases}
1&\text{if $x\in M\setminus U$,}\\
\xi(x)&\text{if $x\in U$.}
\end{cases}
$$
The Jacobson radical of the principal ideal $g{\mathcal S}^*(M)$ is $\gtm^*$, as required. 
\end{proof}

\subsection{Points of the maximum spectrum with metrizable neighborhoods}
We end this section with the announced characterization of the points of the semialgebraic Stone-C\v{e}ch compactification $\beta M$ that have metrizable neighborhoods.

\begin{thm}\label{mn}
The set of points of $\betaa M$ that have a metrizable neighborhood in $\betaa M$ equals $M_{\lc}\cup(\cl_{\betaa M}(\ol{M}_{\leq1})\setminus\ol{M}_{\leq1})$. 
\end{thm}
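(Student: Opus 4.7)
My plan is to establish the two inclusions separately.

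For the inclusion $M_{\lc}\cup(\cl_{\betaa M}(\ol{M}_{\leq1})\setminus\ol{M}_{\leq1})\subseteq\{\text{points with metrizable neighborhood}\}$, I treat the two cases. If $p\in M_{\lc}$, Proposition \ref{rho} yields a compact semialgebraic neighborhood $K$ of $p$ in $M$; since $K$ is closed in $M$, \ref{closedbeta1}(i) gives $\cl_{\betaa M}(K)=\betaa K=K$, and from Proposition \ref{cn} any basic closed neighborhood $\cl_{\betaa M}(U_n)$ of $p$ with $U_n\subset K$ lies inside $K$, so $K$ itself is a neighborhood of $p$ in $\betaa M$; being compact in $\R^m$, it is metrizable. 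If $p\in\cl_{\betaa M}(\ol{M}_{\leq1})\setminus\ol{M}_{\leq1}$, Corollary \ref{eum}(i) gives $p\in\eta(\betaa M)$, so $p$ has an open neighborhood homeomorphic to $[0,1)$, which is metrizable.

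For the reverse inclusion, assume $p\in\rho_1(M)\cup\partial M_{\geq2}$ and suppose, for contradiction, that $p$ has an open metrizable neighborhood $U$ in $\betaa M$. Since $U$ is open in the compact Hausdorff space $\betaa M$, it is locally compact, and I may shrink it to the interior $V$ of a compact metrizable neighborhood $K\subset U$ of $p$; then $V$ is second-countable and, being open in $\betaa M$, every $q\in V$ has a countable basis of neighborhoods in $\betaa M$. I claim $V\cap(\partial M\setminus\widehat{\partial}M)\neq\varnothing$. If $p\in\partial M_{\geq2}$ then $p\in V\cap\partial M_{\geq2}$. If $p\in\rho_1(M)\subset M$, since $\partial\ol{M}_{\leq1}=\cl_{\betaa M}(\ol{M}_{\leq1})\setminus\ol{M}_{\leq1}$ is finite by Corollary \ref{eum}(i) and $p\notin\partial\ol{M}_{\leq1}$, the set $W:=V\setminus\partial\ol{M}_{\leq1}$ is still an open neighborhood of $p$ in $\betaa M$; were $W\cap\partial M_{\geq2}$ empty, Lemma \ref{delrel}(ii) would give $W\subset M$, so $W$ would be an open subset of $\betaa M$ contained in $M$, hence locally compact, forcing $p\in M_{\lc}$ and contradicting $p\in\rho_1(M)$. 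In either case $V\cap\partial M_{\geq2}\neq\varnothing$, and since this forces $M_{\geq2}$ to be non-compact, Theorem \ref{diferencias-s} applied to $M_{\geq2}$, combined with Lemma \ref{delrel}(i) which identifies $\widehat{\partial}M_{\geq2}=\widehat{\partial}M\cap\partial M_{\geq2}$, produces a point $q\in V\cap(\partial M\setminus\widehat{\partial}M)$.

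The contradiction is then drawn from the fact that $q$ enjoys a countable basis of neighborhoods in $\betaa M$ while $q\notin\widehat{\partial}M$. The plan is to turn such a countable basis into a formal path realizing $q$: for a countable basis $\{V_n\}_n$ at $q$, pick by density of $\widetilde{\partial}M$ in $\partial M$ (Lemma \ref{deltam}(iii)) semialgebraic paths $\alpha_n\in M_{F_0}$ with $\gtm^*_{\alpha_n}\in V_n$, and then use the explicit description of basic neighborhoods $U_{\ell,k}$ in the proof of Theorem \ref{gpn3} -- built from polynomial truncations $\gamma_\ell$ of the formal components -- to extract compatible polynomial data from the $\alpha_n$ and assemble, via a diagonal argument, a formal series $\alpha\in\R[[\t]]^m$ with $\gtm^*_\alpha=q$, contradicting $q\notin\widehat{\partial}M$. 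The principal obstacle of the whole proof lies precisely here: rigorously reversing Theorem \ref{gpn3} by showing that a countable neighborhood basis at $q$ pins down a formal path representing $q$ requires delicate control over how bounded semialgebraic functions constrain the coordinates along formal paths.
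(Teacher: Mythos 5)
Your forward inclusion is fine (and essentially the paper's, modulo your use of compact neighborhoods from Proposition \ref{rho} instead of the openness of $M_{\lc}$ in $\betaa M$). The reverse inclusion, however, has a genuine gap exactly where you flag it, and it is not a technicality that can be patched by a ``diagonal argument'': your entire contradiction rests on the claim that a point of $\partial M\setminus\widehat{\partial}M$ cannot have a countable basis of neighborhoods in $\betaa M$, i.e.\ on the \emph{converse} of Theorem \ref{gpn3}. Neither the paper nor your argument establishes this converse. Theorem \ref{gpn3} only says that points of $\widehat{\partial}M$ are first-countable; it does not say that first-countable points of $\partial M$ lie in $\widehat{\partial}M$, and the sketched passage from a countable neighborhood basis $\{V_n\}$ at $q$ to a single formal path $\alpha$ with $\gtm^*_\alpha=q$ does not work as stated: choosing $\gtm^*_{\alpha_n}\in V_n$ by density of $\widetilde{\partial}M$ gives paths whose associated ideals merely accumulate at $q$, and there is no mechanism forcing their truncations to stabilize into a formal series, nor, if they did, forcing the resulting ideal to equal $q$ rather than some other point of $\bigcap_n\cl_{\betaa M}(V_n)$. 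So the proof is incomplete at its decisive step.

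For comparison, the paper avoids first-countability altogether in this theorem. Given a metrizable neighborhood $W$ of $p$, it picks $f\in{\mathcal S}^*(M)$ with $p\in{\mathcal D}_{\betaa M}(f)\subset W$, $\widehat f(p)=c>0$, and considers the closed semialgebraic set $Z:=f^{-1}([\tfrac{c}{2},+\infty))$. By Lemma \ref{fc} and \ref{closedbeta1}(i), $\cl_{\betaa M}(Z)\cong\betaa Z$ is a metrizable neighborhood of $p$, and the metrizability criterion of \cite[5.17]{fg5} then forces $Z_{\geq2}$ to be compact. A short case analysis ($p\in Z_{\geq2}$, $p\in\ol{Z}_{\leq1}$, or $p\in\partial\ol{Z}_{\leq1}$, the last being a finite set) lands $p$ either in $M_{\lc}$ (via Remark \ref{curvas}(iii) and Proposition \ref{rho}) or, via \ref{restricti0}, Corollary \ref{eum2} and Corollary \ref{eum}(i), in $\cl_{\betaa M}(\ol{M}_{\leq1})\setminus\ol{M}_{\leq1}$. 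If you want to salvage your route, you would need to prove the converse of Theorem \ref{gpn3} first, which is a substantially harder (and, as far as this paper shows, open) statement; the localization to $Z$ plus \cite[5.17]{fg5} is the intended shortcut.
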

\begin{proof}
Let $\Tt$ be the set of points of $\betaa M$ that have a metrizable neighborhood in $\betaa M$ and let $q\in M_{\lc}\cup(\cl_{\betaa M}(\ol{M}_{\leq1})\setminus\ol{M}_{\leq1})$. If $q\in\cl_{\betaa M}(\ol{M}_{\leq1})\setminus\ol{M}_{\leq1}$, then $q$ has by Corollary \ref{eum}(i) an open neighborhood in $\betaa M$ that is homeomorphic to $[0,1)$, which is a metrizable space. On the other hand, $M_{\lc}$ is open in $\betaa M$ and a metrizable neighborhood of all its points. Therefore $M_{\lc}\cup(\cl_{\betaa M}(\ol{M}_{\leq1})\setminus\ol{M}_{\leq1})\subset \Tt$. Let us prove $\Tt\subset M_{\lc}\cup(\cl_{\betaa M}(\ol{M}_{\leq1})\setminus\ol{M}_{\leq1})$ next. 

Let $p\in \Tt$ and $W$ be a metrizable neighborhood of $p$ in $\betaa M$. Let $f\in{\mathcal S}^*(M)$ be such that $p\in{\mathcal D}_{\betaa M}(f)\subset W$. We may assume that the unique continuous extension $\widehat{f}:\betaa M\to\R$ of $f$ to $\betaa M$ satisfies $\widehat{f}(p)=c>0$ and we consider the closed semialgebraic subset $Z:=f^{-1}([\tfrac{c}{2},+\infty))=\widehat{f}^{-1}([\tfrac{c}{2},+\infty))\cap M$ of $M$. 

By \ref{closedbeta1}(i) $\betaa Z$ is homeomorphic to $\cl_{\betaa M}(Z)$ and by Lemma \ref{fc} $\cl_{\betaa M}(Z)\cong\betaa Z$ is a neighborhood of $p$ in $\betaa M$. It contains $p$ and is metrizable because it is a subset of $W$. Hence, by \cite[5.17]{fg5} the subset $Z_{\geq2}$ of points of local dimension $\geq2$ of $Z$ is compact. We write $Z=Z_{\geq2}\cup\ol{Z}_{\leq1}$. Observe that $\ol{Z}_{\leq1}$ is a closed subset of $M$. By \ref{closedbeta1}(i) we can identify $\betaa Z=\betaa Z_{\geq2}\cup\betaa\,\ol{Z}_{\leq1}=Z_{\geq2}\cup\betaa\,\ol{Z}_{\leq1}$. We distinguish two cases:

\noindent{\em Case \em 1.} If $p\in Z_{\geq2}$, then $p\in M$ and $f(p)=c$. As $Z_{\geq2}$ is compact, $Z$ is by Remark \ref{curvas}(iii) a locally compact neighborhood of $p$ in $M$. By Proposition \ref{rho} $p\in M_{\lc}$.

\noindent{\em Case \em 2.} If $p\not\in Z_{\geq2}$, then $p\in\betaa\,\ol{Z}_{\leq1}\setminus Z_{\geq2}$ and $\betaa\,\ol{Z}_{\leq1}\setminus Z_{\geq2}$ is a neighborhood of $p$ in $\betaa M$. There are two possibilities:

(a) $p\in\ol{Z}_{\leq1}$, so $\ol{Z}_{\leq1}\setminus Z_{\geq2}=(\betaa\,\ol{Z}_{\leq1}\setminus Z_{\geq2})\cap M$ is a locally compact neighborhood of $p$ in $M$ and $p\in M_{\lc}$.

(b) $p\in\partial\ol{Z}_{\leq1}=\betaa\,\ol{Z}_{\leq1}\setminus\ol{Z}_{\leq1}\subset\betaa M\setminus M$. As $\partial\ol{Z}_{\leq1}$ is by \cite[5.17]{fg5} a finite set, $\{p\}$ is a connected component of $\partial M$, that is, $p$ is a closed point of $\betaa M$ that is isolated for the inverse topology of $\Speca(M)$. By \ref{restricti0} and Corollary \ref{eum2} $p\in M\cup\eta(\betaa M)$. As $p\not\in M$, we conclude by Corollary \ref{eum}(i) that $p\in\cl_{\betaa M}(\ol{M}_{\leq1})\setminus\ol{M}_{\leq1}$, as required.
\end{proof}

\appendix
\section{Non-semialgebraic homeomorphism between semialgebraic sets}

The behavior of a non-semialgebraic homeomorphism between semialgebraic sets can turn out to be unpredictable with respect to its possible extensions to the semialgebraic Stone--\v{C}ech compactification. In fact, semialgebraic paths become useless in the absence of semi\-algebraicity.

\begin{examples}\label{gpn}
(i) Let $M:=\R^{2}\setminus\{(0,0)\}$ and consider the smooth path $\gamma:(0,1]\to M,\ t\mapsto(t,\lambda\exp(-1/t))$ where $\lambda$ is a fixed positive real number. Then 
\begin{itemize}
\item[(1)] For all $f\in{\mathcal S}^*(M)$ there exists $\lim_{t\to0^+}\,(f\circ\gamma)(t)\in\R$.
\item[(2)] The set $\gtm^*:=\{f\in{\mathcal S}^*(M):\, \lim_{t\to0^+}\,(f\circ\gamma)(t)=0\}$ is a maximal ideal of ${\mathcal S}^*(M)$.
\item[(3)] $\gtm^*=\gtm^*_\alpha$ where $\alpha:(0,1]\to M,\ t\mapsto(t,0)$.
\end{itemize}
\begin{proof}
Let $f\in{\mathcal S}^*(M)$ and $\widehat{f}:\betaa M\to\R$ be the unique continuous extension of $f$ to $\betaa M$. Assume $\widehat{f}(\gtm^*_\alpha)=0$ and observe that statements (1), (2) and (3) are straightforward consequences of the following equality that we prove next.
\begin{equation}\label{evaluando}
\lim_{t\to0^+}f(t,\lambda\exp(-1/t))=0=\lim_{t\to0^+}f(t,0)=\widehat{f}(\gtm^*_\alpha).
\end{equation}

By Corollary \ref{neigh0} the set $Z_{\partial M}(\widehat{f})$ is a closed neighborhood of $\gtm_\alpha^*$ in $\partial M=\betaa M\setminus M$. Thus, there exists $g\in{\mathcal S}^*(M)$ such that $\gtm_{\alpha}^*\in{\mathcal D}_{\betaa M}(g)\cap\partial M\subset Z_{\partial M}(\widehat{f})$. We may also assume $c:=\widehat{g}(\gtm_\alpha^*)>0$. Consider the closed semialgebraic set $Z:=g^{-1}([\tfrac{c}{2},+\infty))\cap\{x^2+y^2\leq1\}$. As $\gtm^*_\alpha\in{\mathcal D}_{\betaa M}(g)\cap\partial M$, there exists $\veps>0$ such that $Y_\veps:=(0,\veps]\times\{0\}\subset Z$. 

Otherwise, as $Z$ is semialgebraic, there exists $\veps>0$ such that the closed semialgebraic subsets $Z$ and $Y=Y_\veps$ of $M$ are disjoint. Then there exists by \cite{dk} a semialgebraic function $h\in{\mathcal S}^*(M)$ such that $h|_Z=0$ and $h|_Y=1$. Thus, $\widehat{h}(\gtm_\alpha^*)=1$ and by \ref{closedbeta1}(i)\&(ii) and Lemma \ref{fc} we obtain
$$
\gtm^*_\alpha\not\in {\mathcal Z}_{\betaa M}(h)\supset\cl_{\betaa M}(Z)=\cl_{\betaa M}(\widehat{g}^{-1}([\tfrac{c}{2},+\infty))\cap\cl_{\betaa M}(x^2+y^2\leq 1),
$$
which is a contradiction.

As the Taylor series at the origin of $\lambda\exp(-1/t)$ is identically zero, the image of $\gamma|:(0,\delta]\to M$ for $\delta>0$ small enough is contained in $Z$. As $\widehat{g}^{-1}([\tfrac{c}{2},+\infty))\cap\partial M\subset Z_{\partial M}(\widehat{f})$, the closure of the graph $T$ of $f|_Z$ in $\R^3$ is $T\cup\{(0,0,0)\}$. Consequently, equality \eqref{evaluando} holds, as required.
\end{proof}

(ii) Let $M:=\R^2\setminus\{(0,0)\}$ and consider the homeomorphism $\varphi:M\to M$ given by
$$
(x,y)\mapsto
\left\{\begin{array}{ll}
\!\!(x,(1-\frac{\exp(-1/x)}{x})(2y-x)+\exp(-1/x))&\text{ if $0\leq\frac{1}{2}x\leq y\leq x$,}\\[4pt]
\!\!(x,\frac{2\exp(-1/x)}{x}y)&\text{ if $0\leq y \leq\frac{1}{2}x$,}\\[5pt]
\!\!(x,y)&\text{ if $y\leq 0$ or $x\leq y$.}
\end{array}
\right.
$$
Note that $\varphi(t,\mu t)=(t,2\mu\exp(-1/t))$ for $0\leq\mu\leq 1/2$ and $t>0$. Thus, the homeomorphism $\varphi:M\to M$ cannot be extended to a homeomorphism $\widehat{\varphi}:\betaa M\to\betaa M$ because by (i) such an extension would map the (distinct) maximal ideals $\gtm_{\mu}^*:=\{f\in{\mathcal S}^*(M):\, \lim_{t\to0^+}f(t,\mu t)=0\}$, where $0\leq\mu\leq 1/2$, onto the maximal ideal $\gtm_{\alpha}^*$ described in (i.3).
\end{examples}

\bibliographystyle{amsalpha}

\end{document}